\documentclass[12pt, a4paper]{amsart}
\usepackage{url,graphicx}
\usepackage{multirow, xcolor}
\usepackage{fullpage}
\usepackage{amsfonts,amscd,amssymb,amsmath,amsthm,mathrsfs,multirow,lscape,xfrac}
\usepackage{pbox,lipsum, stmaryrd,url, epstopdf,float, enumerate}
\usepackage{array}
\usepackage{xspace, comment}
\usepackage[all,cmtip]{xy}
\usepackage[english]{babel}
\usepackage{color}

\usepackage{ucs}
\usepackage[utf8]{inputenc}
\usepackage{dsfont}
\usepackage[T1]{fontenc} 
\usepackage{lmodern}
\usepackage[thinlines]{easytable}

\usepackage{phaistos}
\usepackage{microtype}

\usepackage{enumitem}
\usepackage{scalerel}
\usepackage{todonotes}
\usepackage{stackengine,wasysym}
\usepackage{todonotes}
\usepackage[colorlinks,linkcolor=blue,anchorcolor=blue,citecolor=blue,backref=page]{hyperref} % Clickable refs and citations
\usepackage{cleveref}
\usepackage[english]{babel}

\newtheorem{theorem}{Theorem}[section]
\newtheorem{cor}[theorem]{Corollary}
\newtheorem{lemma}[theorem]{Lemma}

\newtheorem{definition}[theorem]{Definition}
\newtheorem{remark}[theorem]{Remark}

\newcommand{\CC}{\mathbb{C}}
\newcommand{\QQ}{\mathbb{Q}}
\newcommand{\RR}{\mathbb{R}}
\newcommand{\ZZ}{\mathbb{Z}}

\newcommand{\N}{\mathscr{N}}

\def\tareesidedbox#1{%
\setbox0=\hbox{$\vphantom{\zeta^{(k)}}#1$}%
\dimen0=\wd0
\advance\dimen0 by3pt
\rlap{\hbox{\vrule height11pt width.4pt depth2pt
\kern-.4pt
\vrule height11.4pt width\dimen0 depth-11pt
\kern-.4pt
\vrule height11pt width.4pt depth2pt}}%
\hbox to\dimen0{\hss$\vphantom{\zeta^{(k)}}#1$\hss}}
\def\house#1{\tareesidedbox{#1}}

\title[On the classification of small cyclotomic integers]{On the classification of small cyclotomic integers}
\author{Jitendra Bajpai}
\address{Department of Mathematics, Christian-Albrechts University of Kiel, 24118 Kiel, Germany and Einstein Institute of Mathematics, Edmond J. Safra Campus, The Hebrew University of Jerusalem, Jerusalem 9190401, Israel}
\email{jitendra@math.uni-kiel.de}

\author{Srijan Das}
\address{Department of Mathematics, Indian Institute of Science Education and Research Pune, Maharashtra, India}
\email{das.srijan@students.iiserpune.ac.in}

\author{Kiran S. Kedlaya}
\address{Department of Mathematics, University of California San Diego, 9500 Gilman Drive \#0112, La Jolla, CA, 92122, USA}
\email{kedlaya@ucsd.edu}

\author{Jorge Mello}
\address{Department of Mathematics and Statistics, Oakland University, Auburn Hills, Michigan, 48309, USA}
\email{jorgedemellojr@oakland.edu}

\subjclass[2020]{Primary 11R18; secondary 11R06, 11Y40}
\keywords{Cassels's theorem, cyclotomic integers}
\begin{document}
\date{\today}

\begin{abstract}
We give a general classification theorem for cyclotomic algebraic integers with all complex absolute values bounded by a fixed constant $c$, modeled on the theorem of Cassels which treats the case $c = \sqrt{5}$ up to finitely many exceptions. As a corollary, we establish that the range of the function taking a cyclotomic integer to its maximum complex absolute value is a well-ordered (but not closed) subset of the real numbers. We also formulate analogous statements for algebraic numbers in the maximal cyclotomic extension of a fixed number field. The proofs combine a result of Loxton, which bounds the number of roots of unity in the shortest additive representation of a cyclotomic integer in terms of the maximum complex absolute value, 
with an equidistribution theorem of Bilu et al. for Galois orbits of torsion points on algebraic tori.
\end{abstract}

\maketitle

\tableofcontents

%--------------------------------------------------------------------------
%	INTRODUCTION
%-------------------------------------------------------------------------
\section{Introduction}

The problem of classifying cyclotomic algebraic integers which are small (i.e., which have small absolute value under any embedding into $\CC$) was originally raised by Robinson \cite{robinson} and subsequently studied in detail by numerous authors, notably including Cassels \cite{cassels}. In this paper, we describe a general form for the classification of small cyclotomic integers, modeled on a slightly modified formulation of \cite[Theorem~I]{cassels}. 

To give this formulation, we must introduce some notation.
For any number field $K$, we denote by $K^{\mathrm{cycl}}$ its cyclotomic closure (i.e., the compositum with the maximal abelian extension of $\mathbb{Q}$).
For a given algebraic integer $\alpha$, we define the \emph{house} of $\alpha$, denoted $\house{\alpha}$, to be the maximum of $|\beta|$ as $\beta$ varies over conjugates of $\alpha$ in $\CC$; this is evidently invariant under multiplication by a root of unity.
It is often more convenient to work with the quantity $\house{\alpha}^2$; following \cite{BDKLLLM}, we refer to this as the \emph{castle} of $\alpha$.

In this notation,  \cite[Theorem~I]{cassels} can be stated as follows.

\begin{theorem}[Cassels] \label{T:cassels}
For $\alpha$ a cyclotomic algebraic integer in $\mathbb{Q}^{\mathrm{cycl}}$, $\house{\alpha}^2 < 5.01$ if and only if $\alpha$ is a root of unity times one of the following.
\begin{enumerate}
\item $1+\zeta$ for some root of unity $\zeta$.
\item $1 + \zeta- \zeta^{-1}$ for some root of unity $\zeta$.
\item $(\zeta_5+\zeta_5^4) + (\zeta_5^2 + \zeta_5^3)\zeta$ for some root of unity $\zeta$.
\item A member of a certain effectively computable finite set.
An explicit set of this form is identified in  \cite{BDKLLLM}.
\end{enumerate}
\end{theorem}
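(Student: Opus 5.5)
The plan is to follow Cassels's original strategy, organized around the number of roots of unity in a shortest representation. One direction is immediate: for each listed family we compute the conjugates and bound their absolute values. For instance, $|1+\zeta|^2 \le 4$. Next, $|1+\zeta-\zeta^{-1}|^2 = |1+2i\sin\theta|^2 \le 5$. Finally, $(\zeta_5+\zeta_5^4)+(\zeta_5^2+\zeta_5^3)\zeta$ has conjugates of the form $\phi + \phi' \zeta'$ with $\{\phi,\phi'\} = \{(-1\pm\sqrt5)/2\}$, so their absolute values are at most $|\phi|+|\phi'| = \sqrt5$. The finite exceptional set is checked by direct computation.

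For the converse, let $\alpha \in \QQ^{\mathrm{cycl}}$ with $\house{\alpha}^2<5.01$. Choose $N$ minimal with $\alpha\in\QQ(\zeta_N)$, and write $\alpha$ as a sum of the fewest possible roots of unity. The first step uses the averaging identity: the mean of $|\sigma\alpha|^2$ over $\mathrm{Gal}(\QQ(\zeta_N)/\QQ)$ is at most $5.01$. Following Cassels, this mean is controlled from below. Write $N = p^k M$ with $p \nmid M$ and expand $\alpha = \sum_{j} \alpha_j \zeta_{p^k}^j$ with $\alpha_j\in\QQ(\zeta_{p^{k-1}M})$. Averaging over the subgroup fixing $\QQ(\zeta_{p^{k-1}M})$ expresses $\mathcal{M}(\alpha)$, the normalized trace of $|\alpha|^2$, as an explicit quadratic form in the $\mathcal{M}(\alpha_j)$ and $\mathcal{M}(\alpha_i-\alpha_j)$. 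For large $p$ (or $k\ge2$), many $\alpha_j$ are forced to be nonzero unless $\alpha$ is a root of unity times an element of a smaller field. Iterating this bounds the number of nonzero terms, and hence the number of roots of unity in the shortest representation, by an absolute constant. I would take this from Loxton's theorem (bounded house implies boundedly many roots of unity), or re-derive it from Cassels's Lemma on $\mathcal{M}$.

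With at most $n$ roots of unity, write $\alpha = \zeta_0(1+\zeta_1+\cdots+\zeta_{n-1})$ up to rational coefficients. The relations among the exponents then split into finitely many ``shapes'': linear subspaces of the torus $\mathbb{G}_m^{n-1}$ parametrizing the $\zeta_i$. For a given shape, as the order of the torsion point grows, the Galois orbit equidistributes on the corresponding subtorus (Bilu et al.). Consequently $\house{\alpha}^2$ tends to at least the supremum of $|1+\sum z_i|^2$ over that subtorus, and the condition $\house{\alpha}^2<5.01$ forces this supremum to be at most $5.01$. Classifying the subtori (Laurent polynomials) whose sup norm squared is at most $5.01$ is a finite combinatorial problem. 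It should leave exactly the one-parameter families $1+z$, $1+z-z^{-1}$, and the golden-ratio family, whose coefficients live in $\QQ(\zeta_5)$, together with degenerate cases. Points of bounded order on each subtorus give finitely many exceptions, which are effectively computable because the equidistribution rate is effective.

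The main obstacle is the last classification step, together with effectivity. One needs a uniform lower bound showing that every subtorus family except the three listed has supremum at least $5.01+\varepsilon$. For the borderline families one must also make sure that their torsion points which are not of the listed shape still satisfy the house bound only finitely often. I would handle this as Cassels did: by an explicit case analysis on $N$ squarefree versus non-squarefree and on the number of primes dividing $N$, using the explicit $\mathcal{M}$-inequalities. The computation of the finite set itself I would defer to \cite{BDKLLLM}.
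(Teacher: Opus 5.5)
The paper does not prove this statement. It quotes \cite[Theorem~I]{cassels}, with the explicit finite set taken from \cite{BDKLLLM}. Its own machinery (Loxton plus equidistribution, \Cref{T:general classification rational case}) shows only that \emph{some} finite classifying family exists, without identifying it. Your ``if'' direction is correct, and your soft part is essentially that same machinery. But at exactly the point where Cassels's theorem has content, you write ``it should leave exactly the one-parameter families\dots'' and defer to Cassels's case analysis. That is the genuine gap.

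What must be shown is this: every Laurent polynomial $P$ with cyclotomic integer coefficients and $\max_\sigma\sup_{(S^1)^n}|\sigma(P)|^2\le 5.01$ takes, at torsion points, only values that are roots of unity times values of $1+z$, $1+z-z^{-1}$, $\phi+\phi'z$ (where $\phi=\zeta_5+\zeta_5^4$, $\phi'=\zeta_5^2+\zeta_5^3$), or of finitely many constants. This is not a finite combinatorial problem. Parseval together with $\mathcal{M}(c)\ge 1$ for nonzero cyclotomic integers $c$ bounds the number of monomials by five, but the exponents remain unconstrained; already $1+\xi_1z^a+\xi_2z^b$ ranges over all $a,b$ and all roots of unity $\xi_1,\xi_2$. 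So you need a real lower bound on the sup norm that is uniform in the exponents, or else Cassels's $\mathcal{M}$-inequalities together with his case analysis on $N$. Appealing to Cassels at this step simply cites the theorem, and the equidistribution detour then does no work.

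The soft part also contains a concrete error. Bilu--Yuan equidistribution on a torsion coset holds for \emph{strict} sequences, not for sequences whose order tends to infinity. For example, the points $(\zeta,-1)\in\mathbb{G}_m^2$ have unbounded order, yet $1+z_1+z_2$ equals the root of unity $\zeta$ there, although $\sup_{(S^1)^2}|1+z_1+z_2|^2=9$. So the torsion points at which a polynomial with too large a sup satisfies the bound are not ``finitely many points of bounded order''. They lie on finitely many proper torsion cosets, each infinite, to which the polynomial must be restricted and the argument repeated. There is also no a priori finite list of ``shapes''.

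The finiteness of this descent is what \Cref{cor:finite union of torsion cosets}, \Cref{L:reduce covering family for K} and \Cref{L:well ordering} provide. It is through the descent, not as exceptions, that the infinite families appear: $1+\zeta-\zeta^{-1}$ and the $\QQ(\zeta_5)$ family are the restrictions of $z_1+z_2+z_3$ and $z_1+\dots+z_4$ to the torsion cosets $z_3=-z_1^2z_2^{-1}$ and $(\zeta_5u,\zeta_5^4u,\zeta_5^2uv,\zeta_5^3uv)$. On such cosets the coefficients become roots of unity. The relevant quantity is therefore the sup over all Galois conjugates of the restricted polynomial, not merely the sup over the subtorus; the paper handles this with the auxiliary coordinate $\zeta_m$.

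Finally, Yuan's theorem is ineffective. Your effectivity claim needs an explicit Loxton function and an effective equidistribution theorem such as \cite{dandrea-narvaezclauss-sombra}, as in \S\ref{sec:remarks on effectivity}.
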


In this paper, we consider what happens when the constant $5.01$ is replaced by an arbitrary positive real number. To state our first result, we let $\mu \subset \mathbb{Q}^{\mathrm{cycl}}$ denote the group of roots of unity.

\begin{theorem} \label{T:general classification rational case}
 For any real number $C>0$, the set of cyclotomic integers $\alpha$ with $\house{\alpha}^2 \leq C$
 can be written in the form
 \[
 \{P_i(\zeta_1,\dots,\zeta_{n_i}) \colon i \in I; \zeta_1, \dots, \zeta_{n_i} \in \mu\}
 \]
 for some finite family $\{P_i(z_1,\dots,z_{n_i})\}_{i \in I}$ of polynomials $P_i$ with cyclotomic integer coefficients.
\end{theorem}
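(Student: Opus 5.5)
We will combine Loxton's theorem with the equidistribution theorem of Bilu et al., as the abstract indicates.

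\emph{Step 1: bounded length.} Loxton's result says that a cyclotomic integer $\alpha$ with $\house{\alpha}^2 \leq C$ can be written as a sum of at most $N = N(C)$ roots of unity. So every such $\alpha$ has the form $\alpha = \sum_{j=1}^{m} \omega_j$ with $m \leq N$ and $\omega_j \in \mu$. It therefore suffices to treat, for each fixed $m \leq N$, the set $S_m$ of tuples $(\omega_1,\dots,\omega_m) \in \mu^m$ whose sum has castle at most $C$. We will show that the sums coming from $S_m$ are covered by finitely many polynomial families. Each family will have the form $P(\zeta_1,\dots,\zeta_n) = \sum_j c_j \prod_k \zeta_k^{a_{jk}}$, where the $c_j$ are roots of unity and the $a_{jk}$ are integers. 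Negative exponents cause no trouble: substituting $\zeta_k \mapsto \zeta_k^{-1}$ or rescaling turns them into honest polynomials, since $\mu$ is closed under inversion.

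\emph{Step 2: torsion points in a torus.} View $(\omega_1,\dots,\omega_m)$ as a torsion point of $\mathbb{G}_m^m$. Its Galois conjugates are $(\omega_1^{\sigma},\dots,\omega_m^{\sigma})$, and the conjugates of $\alpha$ are the values of $f(z) = z_1+\dots+z_m$ at these points. Hence $\house{\alpha}^2 \leq C$ says that the whole Galois orbit of the torsion point lies in the closed subset $U = \{z \in (S^1)^m : |f(z)|^2 \leq C\}$ of the compact torus.

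We now apply Bilu et al. Their equidistribution theorem says the following. Take a sequence of torsion points whose ``strict'' orders tend to infinity, meaning they eventually avoid every proper torsion coset. Then their Galois orbits equidistribute to Haar measure on $(S^1)^m$. More generally, a sequence lying in a torsion coset $\eta T$ of a subtorus $T$ equidistributes to Haar measure on $\eta T$, provided it is eventually not contained in any smaller torsion coset.

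\emph{Step 3: induction on dimension (the main obstacle).} We argue by Noetherian induction on torsion cosets $\eta T \subseteq \mathbb{G}_m^m$. The claim to prove is this: the torsion points of $\eta T$ whose orbits lie in $U$ form a union of finitely many sets of the form (torsion points of some torsion coset $\eta' T'$ on which $|f|^2 \leq C$ identically on $\eta' T' \cap (S^1)^m$), together with finitely many isolated points.

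Suppose $|f|^2 \le C$ fails somewhere on $\eta T \cap (S^1)^m$. Then $|f|^2 > C$ on a nonempty open subset, which has positive Haar measure. By equidistribution, any infinite family of qualifying points must therefore accumulate in proper torsion cosets. The technical heart is to make this uniform. We would invoke the form of the theorem in Bilu et al. stating that the exceptional points lie in finitely many proper torsion subcosets. If it is only available for sequences, we would argue by contradiction with a diagonal sequence and then use the Manin--Mumford/Laurent theorem to replace an infinite family of subcosets by finitely many. We then recurse into each proper subcoset, which has smaller dimension, so the induction terminates.

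When $|f|^2 \le C$ holds on all of $\eta' T' \cap (S^1)^m$, every torsion point of $\eta' T'$ qualifies. Parametrize $T'$ by a monomial map $(\zeta_1,\dots,\zeta_n) \mapsto (\zeta^{b_1},\dots,\zeta^{b_m})$, where $n = \dim T'$ and the $b_i \in \ZZ^n$ are exponent vectors. Torsion points of $\eta' T'$ are exactly the images of $\mu^n$, because a surjective homomorphism of tori maps torsion onto torsion. Composing with $f$ gives $P(\zeta) = \sum_i \eta'_i \zeta^{b_i}$, a Laurent polynomial with root-of-unity coefficients. The isolated points contribute the constant polynomials. Collecting over all $m \le N$ gives the finite family $\{P_i\}$.

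For the converse inclusion, we only keep the pieces on which the bound holds identically. Every value $P_i(\zeta)$ is then a cyclotomic integer with castle at most $C$: any conjugate of $P_i(\zeta)$ is a value of $f$ at another torsion point of the same coset, so it also satisfies the bound. Thus the family $\{P_i\}$ describes the set exactly.
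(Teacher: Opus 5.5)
Your architecture matches the paper's: Loxton reduces everything to values of $z_1+\dots+z_m$ at torsion points. Equidistribution plus a countability/diagonal argument then confines the qualifying torsion points to finitely many proper torsion cosets, and you descend in dimension; your induction on $\dim T$ does the work of the paper's well-ordering lemma. However, you overlook that a torsion coset $\eta T$ need not be Galois-stable: if $x\in\eta T$ then $\sigma(x)\in\sigma(\eta)T$, which is in general a different coset. This has two consequences.
\begin{itemize}
\item Your equidistribution statement is misstated. The Galois orbits of a sequence in $\eta T$ avoiding smaller torsion cosets converge to the average of the Haar measures on the conjugate cosets $\sigma(\eta)T\cap(S^1)^m$, not to Haar measure on $\eta T$.
\item More seriously, your stopping criterion and your converse inclusion are false. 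The bound $|f|^2\le C$ on $\eta'T'\cap(S^1)^m$ does not make every torsion point of $\eta'T'$ qualify, because a conjugate of $P_i(\zeta)$ is a value of $f$ on a \emph{conjugate} coset, not on the same one.
\end{itemize}
For instance, take $m=2$ and $C=1$. On the coset $\{(z,\zeta_5^2z)\}$ one has $|f|^2=|1+\zeta_5^2|^2=(2\cos(2\pi/5))^2\approx 0.38$ identically. Yet the torsion point $(1,\zeta_5^2)$ gives $\alpha=1+\zeta_5^2$, whose castle is $|1+\zeta_5|^2=(2\cos(\pi/5))^2\approx 2.62>1$. The finitely many cosets produced by the diagonal argument are not canonical, so nothing in your construction prevents such a coset from being selected and accepted. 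As written, your final family need not be classifying.

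The repair is what the paper builds into its reduction lemma. After parametrizing, $P=f\circ\psi$ has root-of-unity coefficients, and you should stop only when $\max_{\sigma}\max_{z\in(S^1)^n}|\sigma(P)(z)|^2\le C$, with $\sigma$ acting on those coefficients. Equivalently, the bound must hold on every conjugate coset $\sigma(\eta')T'$. If this fails for some $\sigma_0$, the correct limit measure gives positive mass to $\sigma_0(\eta')T'$, so your contradiction still goes through. The paper obtains this measure by lifting to $(\zeta_m,\alpha_k)$ and using $\nu_W=\frac1d\sum_j\delta_{w_j}\times\nu_{n_i}$. When the bound does hold on all conjugates, the converse inclusion follows from $\sigma(P(\zeta))=\sigma(P)(\sigma(\zeta))$ with $\sigma(\zeta)$ on the unit torus. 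A minor point: the substitution $\zeta_k\mapsto\zeta_k^{-1}$ cannot clear a variable that occurs with both signs. Instead, use rotation invariance of the castle and replace $P$ by $w\,z^cP(z)$ with a fresh variable $w$; its value set is $\mu\cdot P(\mu^n)$, which contains $P(\mu^n)$ and still has castle at most $C$.
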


We also establish a ``semicontinuity'' theorem generalizing the following result of Robinson--Wurtz \cite{robinson-wurtz}: the function $\alpha \mapsto \house{\alpha}^2$ on cyclotomic integers takes no values in the range $(5, 5.01)$.

\begin{theorem} \label{T:semicontinuity rational case}
For any real number $C>0$, there exists $\epsilon> 0$ with the property that there is no cyclotomic integer $\alpha$ with $C < \house{\alpha}^2 < C + \epsilon$.
\end{theorem}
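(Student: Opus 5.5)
We derive Theorem~\ref{T:semicontinuity rational case} from Theorem~\ref{T:general classification rational case}, applied with the constant $C+1$. We argue by contradiction. Suppose there is a sequence of cyclotomic integers $\alpha_k$ with $\house{\alpha_k}^2 > C$ and $\house{\alpha_k}^2 \to C$. By the classification, after passing to a subsequence we may assume $\alpha_k = P(\zeta_{1,k},\dots,\zeta_{n,k})$ for a single polynomial $P$ from the finite family. The coefficients of $P$ lie in a fixed cyclotomic field $\mathbb{Q}(\zeta_N)$.

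The house of $P(\zeta_1,\dots,\zeta_n)$ is then
\[
\max_{\sigma} \bigl|P^{\sigma}(z_1,\dots,z_n)\bigr|,
\]
where the maximum runs over the Galois conjugates. Here $\sigma$ acts on the coefficients through $\mathrm{Gal}(\mathbb{Q}(\zeta_N)/\mathbb{Q})$, and $(z_1,\dots,z_n)$ runs over the Galois orbit of the torsion point $(\zeta_1,\dots,\zeta_n)$ in $\mathbb{G}_m^n$, restricted compatibly with $\sigma$. So it suffices to understand the function
\[
\zeta \mapsto \max_{\text{orbit}} |P^\sigma|^2
\]
on torsion points of the torus $T=\mathbb{G}_m^n$.

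We proceed by induction on $n$, the number of free roots of unity. For torsion points $\zeta_k$, pass to a subsequence and apply the equidistribution theorem of Bilu et al. Either the Galois orbits of $\zeta_k$ become equidistributed in a translate of a subtorus $\zeta'T'$, or they do not.

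In the \emph{strict case}, $T'$ is a proper subtorus, or more precisely $\zeta_k$ lies in a torsion coset of a proper subtorus for infinitely many $k$. Then we reparametrize: $P$ restricted to that coset is a polynomial in fewer roots of unity, with coefficients in a larger but fixed cyclotomic field. We then apply induction. The base case $n=0$ is trivial, since there are only finitely many values.

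In the \emph{generic case}, the orbits equidistribute in the full compact torus $(S^1)^n$, together with the finite coefficient twists. Equidistribution to a measure of full support gives
\[
\liminf_k \house{\alpha_k}^2 \geq \max_{(S^1)^n,\sigma} |P^\sigma|^2 =: M.
\]
Here we use lower semicontinuity of the sup under weak convergence to a fully supported measure: the orbit eventually comes close to the maximizer. Conversely, $\house{\alpha_k}^2 \le M$ always, since each conjugate is a value of some $P^\sigma$ on the torus. Hence $\house{\alpha_k}^2 \to M$ from below or equal, which contradicts $\house{\alpha_k}^2 > C$ together with $\house{\alpha_k}^2 \to C$, since then $M \ge \house{\alpha_k}^2 > C$ while the limit is $M = C$.

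The \emph{mixed case} combines the two. The limiting measure is Haar measure on a coset $\eta\,T'(S^1)$ of a subtorus, where $\eta$ is torsion of bounded order and the bound comes from Bilu's theorem on limits of orbits. Writing $\zeta_k = \eta_k \cdot \xi_k$ with $\xi_k \in T'$ and $\eta_k$ in a finite set, we pass to a subsequence with $\eta_k$ constant. The same argument then bounds $\house{\alpha_k}^2$ above and below by the maximum of $|P^\sigma|$ over the real subtorus coset, giving the same contradiction.

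The main obstacle is making this decomposition precise. One must show that any sequence of torsion points has a subsequence that lies in torsion cosets $\eta T'$ with $\eta$ drawn from a finite set, such that the orbits within $T'$ equidistribute to Haar measure on the compact part. One must also track how the Galois action on coefficients interacts with the action on $\zeta$. I would handle this by working over $\mathbb{Q}(\zeta_N)$ and considering orbits under $\mathrm{Gal}(\overline{\mathbb{Q}}/\mathbb{Q}(\zeta_N))$ separately for each $\sigma$.
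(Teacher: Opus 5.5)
Your argument is correct, but it is organized differently from the paper's. The paper (proof of \Cref{T:semicontinuity general}) never follows a single sequence. It starts from a $(C+1)$-covering family and, for $n=1,2,\dots$, refines it via \Cref{L:reduce covering family for K} at level $C+2^{-n}$. Each refinement strictly lowers the invariant $(m_0,m_1,\dots)$, so by \Cref{L:well ordering} the process stops. The final family is then both $C$-classifying and $(C+2^{-n})$-covering, which gives $\epsilon=2^{-n}$.

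You instead fix one polynomial $P$ and show, by induction on the number of variables, that $\house{P(\zeta_k)}^2$ cannot decrease strictly to $C$ along torsion points. Either infinitely many $\zeta_k$ lie in one torsion coset of a proper subtorus, and you reparametrize and induct. Or the sequence is strict, and equidistribution of the relative orbits forces $\house{P(\zeta_k)}^2\to M$, while $\house{P(\zeta_k)}^2\le M$ throughout. That equidistribution step is exactly the convergence to $\nu_W$ proved inside \Cref{L:reduce covering family for K}(a).

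What your route buys:
\begin{itemize}
\item Because you treat one sequence at a time, you only need that a non-strict sequence has infinitely many terms in a single proper algebraic subgroup. This bypasses the countability argument behind \Cref{cor:finite union of torsion cosets}, and induction on $n$ replaces the well-ordering lemma.
\item You only use the covering half of \Cref{T:general classification rational case}, i.e.\ \Cref{L:covering family exists}.
\item You get the extra fact that castles along strict sequences converge to the circle maximum of $P$.
\end{itemize}
The paper's route, in exchange, produces a single finite family that classifies castles $\le C$ and covers castles $\le C+\epsilon$. It does so by a procedure that can be made effective (\S\ref{sec:remarks on effectivity}), whereas a proof by contradiction gives no handle on $\epsilon$.

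Two minor points. First, your ``mixed case'' is redundant, because the induction already absorbs it. Equivalently, pass to a torsion coset of minimal dimension containing infinitely many $\zeta_k$; the translated subsequence is then strict in that subtorus. So the ``main obstacle'' you name is easy. Also, Haar measure on a single coset $\eta T'(S^1)$ is not the right limit, since Galois moves $\eta$ too. Second, you call the situation of lying in a proper coset the ``strict case,'' which is the opposite of the paper's meaning of \emph{strict}.
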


An equivalent formulation of \Cref{T:semicontinuity rational case} is that the range of the function
$\alpha \mapsto \house{\alpha}^2$ on cyclotomic integers is a \emph{well-ordered} subset of $\RR$: every infinite nonincreasing sequence valued in this range is eventually constant. For comparison, we note that this range is not a \emph{closed} subset of $\RR$: whereas every element of the range is itself a (totally real, totally positive) cyclotomic integer, Calegari \cite{calegari-house} observed that 
\[
\frac{97 + 26 \sqrt{13}}{27} \approx 7.064604
\]
is a limit point of the range which is visibly not a cyclotomic integer. (Finding the smallest such limit point will be the topic of subsequent work.)

%\kiran{add references to Bilu, Laurent to bibTeX}\jitendra{just added them}
The proofs of \Cref{T:general classification rational case} and \Cref{T:semicontinuity rational case} depend on two key inputs: a theorem of Loxton \cite{loxton} showing that a cyclotomic integer of bounded castle can be written as a sum of a bounded number of roots of unity, and an equidistribution theorem of Yuan \cite{yuan} concerning Galois orbits of torsion points on tori, generalizing an earlier result of Bilu \cite{bilu}.
(We note a strong resemblance with the argument given in \cite[\S 5]{bilu} to recover Laurent's theorem classifying algebraic subvarieties of tori in which the torsion points are Zariski dense \cite{laurent}.)

Using similar ingredients, we also obtain similar results about algebraic integers in $K^{\mathrm{cycl}}$ for any number field $K$. These generalizations are naturally stated and proved in terms of $S$-integers in $K^{\mathrm{cycl}}$ where $S$ is an arbitrary finite set of places of $K$; see \Cref{thm:general classification} and \Cref{T:uniform gap}.

Although we have not included any statements about effectivity in our main results, for any fixed bound the resulting classification is effectively computable. We include some remarks about this point in \S\ref{sec:remarks on effectivity}.

\begin{comment}
\begin{itemize}
    \item Try to compute $c$ in~\Cref{T:loxton}.
    \item Generalize the current argument over number field/cyclotomic field.
\end{itemize}
\end{comment}

\section{Loxton's theorem}

We state the key theorem of Loxton \cite{loxton} and use it to take a first step in the direction of proving
Theorem~\ref{T:general classification rational case}.
As in \cite{BDKLLLM} we define the \emph{minimal weight} of a cyclotomic integer $\alpha$, denoted $\N(\alpha)$, to be the smallest nonnegative integer $n$ such that $\alpha$ can be written as a sum of $n$ roots of unity (not necessarily distinct). By the triangle inequality, we have $\N(\alpha) \geq \house{\alpha}$; Loxton's theorem is a bound in the opposite direction.

\begin{theorem}[Loxton]\label{T:loxton}
There exists a nondecreasing function $L \colon \RR \to \RR$ with the property that for every cyclotomic integer $\alpha$, 
\[
\N(\alpha) \leq L(\house{\alpha}).
\]
We refer to any such function $L$ as a \emph{Loxton function}.
\end{theorem}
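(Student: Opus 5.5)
Loxton's theorem is a deep published result, and the paper cites \cite{loxton} rather than proving it. The plan below is therefore a reconstruction of the strategy of Loxton's proof, not a short self-contained argument.

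The first step is a reduction to the case where the conductor is squarefree. Write $\alpha\in\QQ(\zeta_N)$ and take a prime $p$ with $p^2\mid N$. Then $\QQ(\zeta_N)$ is free over $\QQ(\zeta_{N/p})$ with basis $1,\zeta_N,\dots,\zeta_N^{p-1}$, so
\[
\alpha=\sum_{j=0}^{p-1}\alpha_j\zeta_N^j, \qquad \alpha_j\in\QQ(\zeta_{N/p}).
\]
Averaging over the Galois group of $\QQ(\zeta_N)/\QQ(\zeta_{N/p})$, which is $\zeta_N\mapsto\zeta_N\zeta_p^k$, gives the Parseval identity
\[
\sum_j|\sigma(\alpha_j)|^2=\frac1p\sum_k|\sigma_k(\alpha)|^2\le\house{\alpha}^2 .
\]
Hence every $\alpha_j$ has house at most $\house{\alpha}$. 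Moreover $\N(\alpha)\le\sum_j\N(\alpha_j)$, and only the nonzero $\alpha_j$ contribute.

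This alone does not bound the number of nonzero $\alpha_j$. The better quantity to track is the average of $|\sigma(\alpha)|^2$ over all embeddings $\sigma$, denoted $\mathcal M(\alpha)$, which satisfies $\mathcal M(\alpha)\le\house{\alpha}^2$. The identity above gives
\[
\mathcal M(\alpha)=\sum_j\mathcal M(\alpha_j).
\]
Each nonzero $\alpha_j$ has $\mathcal M(\alpha_j)\ge1$ by AM--GM applied to the norm. So at most $\house{\alpha}^2$ of the $\alpha_j$ are nonzero, each with controlled house. Iterating, and recording the constants along the way, reduces the problem to squarefree $N$ at a bounded multiplicative cost.

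The second step handles squarefree $N=p_1\cdots p_r$, splitting one prime at a time. Write
\[
\alpha=\sum_{j=0}^{p-1}\beta_j\zeta_p^j, \qquad \beta_j\in\QQ(\zeta_{N/p}).
\]
This representation is unique only up to adding a common element to all the $\beta_j$. I would normalize so as to minimize the number of nonzero $\beta_j - \gamma$. The analogue of Parseval now reads
\[
\frac{1}{p-1}\sum_{k=1}^{p-1}\Bigl|\sum_j\beta_j\zeta_p^{jk}\Bigr|^2=\frac{p}{p-1}\Bigl(\sum_j|\beta_j|^2-\frac1p\Bigl|\sum_j\beta_j\Bigr|^2\Bigr).
\]
This identity shows that if $p$ is large compared with $\house{\alpha}^2$, then after subtracting the majority value $\gamma$ only boundedly many of the $\beta_j-\gamma$ are nonzero. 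For small primes $p$, of which there are finitely many below the threshold, there are only $p$ terms anyway.

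Both steps must be run with the measure $\mathcal M$ rather than the house, so that the bounds can be iterated. This is where the real difficulty lies. The obstacle is that $\mathcal M$ is only an average: a small average does not control the house of the pieces, and induction on the number of prime factors of $N$ needs a quantity that both decomposes additively and forces a bounded weight. Loxton resolves this with a delicate combinatorial analysis of "minimal" representations together with a lower bound of the form
\[
\mathcal M(\alpha)\ge c\log\N(\alpha)
\]
(or a similar growth estimate), proved by induction on the number of primes dividing the conductor. I would attempt to prove such an estimate directly from the two decompositions above: treat the top prime $p$ as a coordinate, show that either $\mathcal M$ increases by a fixed amount or the weight collapses onto few cosets, and conclude with $L(x)=\exp(Cx^2)$.
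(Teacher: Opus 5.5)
The paper gives no proof of this statement; it imports it from \cite{loxton}. Judged as a proof, your proposal has a genuine gap. Two parts are correct. The first is the reduction to squarefree conductor: $\mathcal{M}$ is exactly additive over the pieces and $\mathcal{M}(\beta)\ge 1$ for $\beta\neq 0$, so there are never more than $\mathcal{M}(\alpha)$ pieces. The second is the Parseval-type identity of your second step, equivalently $(p-1)\mathcal{M}(\alpha)=\sum_{j<k}\mathcal{M}(\beta_j-\beta_k)$.

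Everything after that, which is the actual content of the theorem, is stated only as an intention. You never derive a bound on $\N(\alpha)$ for squarefree conductor. The estimate $\mathcal{M}(\alpha)\ge c\log\N(\alpha)$ is invoked, not proved. (What Loxton actually proves is a far stronger, nearly linear lower bound for $\mathcal{M}(\alpha)$ in terms of $\N(\alpha)$, so quoting it would just be citing the theorem.)

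Your treatment of small primes is also not yet an argument. ``Only $p$ terms'' bounds the number of pieces but not their $\mathcal{M}$. The available bound $\sum_j\mathcal{M}(\beta_j-\gamma)\le\frac{p-1}{p-X}\mathcal{M}(\alpha)$ is far from $\mathcal{M}(\alpha)$ when $p$ is small. Since the threshold separating small from large primes depends on $\mathcal{M}$, splitting small primes first can move the threshold. So it is not justified that only a fixed finite set of primes needs special treatment.

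The missing step can be supplied with your own tools, and no control of the house of the pieces is needed. Normalize so that the most frequent $\beta_j$ is $0$, and let $X$ be the number of nonzero $\beta_j$.
\begin{itemize}
\item There are at least $pX/2$ pairs $j<k$ with $\beta_j\ne\beta_k$, each contributing at least $1$, so $X<2\mathcal{M}(\alpha)$.
\item The pairs with $\beta_j=0\ne\beta_k$ give $(p-X)\sum_j\mathcal{M}(\beta_j)\le(p-1)\mathcal{M}(\alpha)$.
\end{itemize}
Hence if $p\ge T(\mathcal{M}(\alpha))$ with $T(M)=4M^2+2M$, then $\sum_j\mathcal{M}(\beta_j)\le\mathcal{M}(\alpha)+\tfrac12$. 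Either $X=1$ and $\alpha=\beta_j\zeta_p^j$ with $\mathcal{M}(\beta_j)=\mathcal{M}(\alpha)$. Or $X\ge 2$, in which case every piece has $\mathcal{M}(\beta_j)\le\mathcal{M}(\alpha)-\tfrac12$, and convexity gives $\sum_j 4^{\mathcal{M}(\beta_j)}\le 4^{\mathcal{M}(\alpha)}$.

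Splitting only at such primes never increases the $\mathcal{M}$ of a piece, so the threshold is stable. Once every prime of the conductor is at most $T(M)$, the element lies in $\ZZ[\zeta_P]$ with $P=\prod_{p\le T(M)}p$. There only finitely many elements have $\mathcal{M}\le M$, because $\mathcal{M}$ is a positive definite quadratic form on this lattice; this defines a nondecreasing bound $G(M)$ for $\N$. Induction on the number of prime factors then gives $\N(\alpha)\le G(\mathcal{M}(\alpha))\,4^{\mathcal{M}(\alpha)}$ for squarefree conductor. Your first step together with $\mathcal{M}(\alpha)\le\house{\alpha}^2$ then yields a Loxton function. Without an argument of this kind, the proposal does not establish the statement.
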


This formally promotes to a comparable statement over a general number field, as observed by Dvornicich--Zannier \cite{DZ}.
\begin{theorem}\label{loxton+}\cite[Theorem L]{DZ} For any number field $K$,
there exist a number $B=B(K)$ and a finite set $E=E(K) \subset K$ with $\# E \leq [K: \mathbb{Q}]$ such that any algebraic integer $\alpha \in K^{\mathrm{cycl}}$ can be written as $\alpha=\sum_{i=1}^b c_i \zeta_i$  where  $c_i \in E$, the $ \zeta_i$'s are roots of unity, $b \leq \# E \cdot  L(B\house{\alpha})$, and $L\colon \mathbb{R} \rightarrow \mathbb{R}$ is any Loxton function in the sense of \Cref{T:loxton}. The constants $B$ and $E$ can be computed in terms of a basis for $K/\mathbb{Q}$.
\end{theorem}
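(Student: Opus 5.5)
The plan is to reduce to Loxton's theorem (\Cref{T:loxton}) by expanding $\alpha$ in a fixed basis of $K$ over $\QQ$. Set $d = [K:\QQ]$ and let $L$ be a Loxton function. The key step is to find an integral basis $\omega_1,\dots,\omega_d$ of $\mathcal{O}_K$ and a positive integer $D$ with
\[
D\,\mathcal{O}_{K^{\mathrm{cycl}}} \subseteq \sum_{j=1}^d \omega_j\, \mathcal{O}_{\QQ^{\mathrm{cycl}}}.
\]
I would first arrange that $K$ and $\QQ^{\mathrm{cycl}}$ are linearly disjoint. Replace $K$ by a finite extension $K'$ containing $K\cap\QQ^{\mathrm{cycl}}$ in a controlled way, and absorb the extra basis elements into $E$ at the end. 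Alternatively, note that $K^{\mathrm{cycl}} = K\cdot \QQ^{\mathrm{cycl}}$, so every $\alpha$ is $\sum_j \omega_j \beta_j$ with $\beta_j \in \QQ^{\mathrm{cycl}}$. Taking $D$ to be the discriminant of $\omega_1,\dots,\omega_d$, the usual discriminant/trace argument relative to any finite subextension $\QQ(\zeta_m)$ shows that each $D\beta_j$ is an algebraic integer. Here $D$ divides $\operatorname{disc}(K)$, and one uses linear disjointness of $K$ and $\QQ(\zeta_m)$ over $K\cap \QQ(\zeta_m)$.

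Next, bound the houses of the $\beta_j$. Let $\sigma_1,\dots,\sigma_d$ be the embeddings of $K$ into $\CC$. For an automorphism $\tau$ of $\overline{\QQ}$, apply $\tau$ to $\alpha = \sum_j \omega_j\beta_j$. As $\tau$ ranges over automorphisms fixing a given conjugate of the $\beta_j$, the restriction $\tau|_K$ ranges over enough embeddings to form an invertible linear system, and linear disjointness gives all $d$ of them. Inverting the matrix $M = (\sigma_i(\omega_j))$ expresses each conjugate of $\beta_j$ as a fixed linear combination of conjugates of $\alpha$. This yields
\[
\house{D\beta_j} \le B\,\house{\alpha}, \qquad B := D\, d\, \max_{i,j}\bigl|(M^{-1})_{ij}\bigr|.
\]

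Loxton's theorem now applies to each cyclotomic integer $D\beta_j$. It writes $D\beta_j$ as a sum of at most $L(B\house{\alpha})$ roots of unity. Setting $c_j = \omega_j/D$ and $E = \{c_1,\dots,c_d\}$, we get
\[
\alpha = \sum_j c_j (D\beta_j),
\]
a sum of at most $d\,L(B\house{\alpha})$ terms of the form $c\zeta$ with $c \in E$. Since $L$ is nondecreasing, this bound is uniform. Both $B$ and $E$ are computed from the basis $\omega_j$ and its discriminant.

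The main obstacle is the linear disjointness issue when $K\cap\QQ^{\mathrm{cycl}} \neq \QQ$. In that case the $\beta_j$ are not uniquely determined, and the Galois-conjugation argument needs care. My first attempt would be a basis of $K$ over $F = K\cap\QQ^{\mathrm{cycl}}$. Then $K$ and $\QQ^{\mathrm{cycl}}$ are linearly disjoint over $F$, so coefficients in $\QQ^{\mathrm{cycl}}$ are unique. Running the embedding argument over $F$ gives a set $E$ of size $[K:F] \le d$, which is consistent with $\#E \le [K:\QQ]$.
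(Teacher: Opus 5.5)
Your proposal is correct. The paper does not prove this statement (it only cites \cite{DZ}), and your argument is essentially the standard proof of Dvornicich--Zannier's Theorem~L. In outline: take a basis of $K$ over $F = K\cap\QQ^{\mathrm{cycl}}$ consisting of algebraic integers. Use linear disjointness of $K$ and $\QQ^{\mathrm{cycl}}$ over $F$ to get unique coordinates $\beta_j \in \QQ^{\mathrm{cycl}}$. Clear denominators with the discriminant via Cramer's rule, bound houses by inverting the embedding matrix, and apply Loxton to each coordinate, giving $\#E = [K:F] \le [K:\QQ]$.

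You should make your ``fallback'' over $F$ the main line from the start; the idea of enlarging $K$ is not needed. In that setting, take $D$ to be the relative discriminant $\det(M)^2 \in \mathcal{O}_F$ (or its norm to $\ZZ$). Also let $B$ take the maximum over all Galois conjugates of the entries of $M^{-1}$. This matters because when $F \neq \QQ$, conjugating $M$ is no longer just a permutation of its rows.
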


\begin{remark} \label{rem:loxton general case}
When $K = \QQ$, \Cref{loxton+} specializes back to \Cref{T:loxton} by taking $B = 1$, $E = \{1\}$. 
In this case the situation is more precise than in the general case of \Cref{loxton+}: not only does every algebraic integer $\alpha \in \QQ^{\mathrm{cycl}}$ have a representation as $\sum_{i=1}^b c_i \zeta_i$, but conversely every element of $\QQ^{\mathrm{cycl}}$ which can be represented this way is actually an algebraic integer. One cannot achieve this for a general number field $K$ because the ring of $K$-cyclotomic integers is not guaranteed to be a finitely generated module over the ring of $\QQ$-cyclotomic integers.

By contrast, let $L$ be a Galois extension of $\QQ$ such that every nonarchimedean place of $L$ has divisible value group (e.g., the maximal \emph{solvable} extension of $\QQ$). Then \cite[Theorem~7(iv)]{lyu} implies that the ring of integers in $L$ is a universally Japanese Pr\"ufer domain; consequently, the ring of integers in the compositum $KL$ is a finitely generated module over the ring of integers of $L$.

We will address this point in \S\ref{sec:nonarchimedean} by putting archimedean and nonarchimedean places on a common footing.
\end{remark}

%Loxton~\cite{loxton} established the following lower bound on $\calM(\alpha)$ in terms of $\N(\alpha)$.

%\begin{theorem}[Loxton]\label{T:loxton}
%For any $k > \log 2$, there exists an effectively computable (in terms of $k$) constant $c$ such that for all nonzero cyclotomic integers $\alpha \in \mathbb{Q}^c$,
%\[
%\calM(\alpha) \geq c n \exp (-k \log n/ \log \log n), \qquad n := \N(\alpha).
%\]
%On the other hand, no such constant exists for $k = \log 2$.
%\end{theorem}
%In particular, if $\alpha=\sum_{i=1}^b \zeta_i$ is a sum of roots of unity, then one can choose the roots of unity $\zeta_i$ so that $b \leq L(\house{\alpha})$ where $L: \mathbb{R}_+ \rightarrow \mathbb{R}_+$ is a suitable specified function that we can take to satisfy $L(x) \ll_\epsilon x^{2+\epsilon}.$ We refer to such function $L(x)$ (so that the above is true) as a \textit{Loxton function}. We will make use of the following extension of Loxton's Theorem to algebraic integers contained in a cyclotomic extension of a given number field.

We apply Loxton's theorem as follows.

\begin{definition}  \label{def:covering family}
For a real number $C > 0$ and a number field $K$, define a \emph{$C$-covering family over $K$} to be a finite set of Laurent polynomials  $\{P_i(z_1,\dots,z_{n_i})\}_{i \in I}$ with coefficients in $K^{\mathrm{cycl}}$ having the following property: 
for every algebraic integer $\alpha$ of $K^{\mathrm{cycl}}$  with $\house{\alpha}^2 \leq C$, $\alpha$ can be written as
$P_i(\zeta_1,\dots,\zeta_{n_i})$ for some index $i \in I$ and some $\zeta_1,\dots,\zeta_{n_i} \in \mu$.

When $K = \QQ$, we further require that each $P_i$ has cyclotomic \emph{integer} coefficients.
This restriction is unreasonable for general $K$ in light of \Cref{rem:loxton general case}.
\end{definition}

\begin{lemma} \label{L:covering family exists}
For every $C > 0$ and every number field $K$, there exists a $C$-covering family over $K$.
\end{lemma}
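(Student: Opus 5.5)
The plan is to read off the covering family directly from \Cref{loxton+} (and \Cref{T:loxton} when $K=\QQ$), with the roots of unity in the representation becoming the variables of the polynomials. Fix $C>0$ and $K$, and let $B=B(K)$, $E=E(K)$ be as in \Cref{loxton+}; fix a Loxton function $L$. Put $N := \lfloor \#E \cdot L(B\sqrt{C}) \rfloor$. Since $L$ is nondecreasing, every algebraic integer $\alpha\in K^{\mathrm{cycl}}$ with $\house{\alpha}^2\le C$ satisfies $\house{\alpha}\le\sqrt C$. Hence \Cref{loxton+} lets us write $\alpha=\sum_{i=1}^b c_i\zeta_i$ with $c_i\in E$, $\zeta_i\in\mu$, and $b\le N$.

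We then take as index set $I$ the set of all tuples $(c_1,\dots,c_b)\in E^b$ with $0\le b\le N$. This set is finite because $E$ is finite and $b$ is bounded. For each such tuple we set
\[
P_{(c_1,\dots,c_b)}(z_1,\dots,z_b) := c_1z_1+\cdots+c_bz_b ,
\]
a linear polynomial with coefficients in $K\subset K^{\mathrm{cycl}}$; the empty tuple gives the zero polynomial, which covers $\alpha=0$. By construction every $\alpha$ as above equals $P_i(\zeta_1,\dots,\zeta_{n_i})$ for some $i\in I$ and roots of unity $\zeta_j$, so $\{P_i\}_{i\in I}$ is a $C$-covering family over $K$.

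For $K=\QQ$ we must also ensure the coefficients are cyclotomic integers. Here we use \Cref{T:loxton} directly, as in \Cref{rem:loxton general case} with $B=1$, $E=\{1\}$. Every cyclotomic integer $\alpha$ with $\house{\alpha}^2\le C$ is a sum of at most $N:=\lfloor L(\sqrt C)\rfloor$ roots of unity. The family $\{z_1+\cdots+z_b : 0\le b\le N\}$ then has integer coefficients and works.

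There is no real obstacle. The only points needing care are the monotonicity of $L$, which lets a single bound $N$ work for all $\alpha$ of castle at most $C$, and the separate treatment of $K=\QQ$, where the integrality requirement is met by taking $E=\{1\}$.
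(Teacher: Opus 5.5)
Your proposal is correct and is essentially the paper's proof. You apply \Cref{loxton+} with $N=\lfloor \#E\cdot L(B\sqrt{C})\rfloor$ and take all linear forms $\sum_{i\le b} c_i z_i$ with $c_i\in E$ and $b\le N$; your inclusion of the empty sum (so that $\alpha=0$ is covered even when $N$ is small) and your explicit treatment of the integrality requirement for $K=\QQ$ via \Cref{T:loxton} with $E=\{1\}$ are small points the paper leaves implicit.
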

\begin{proof}
Let $\alpha \in K^{\mathrm{cycl}}$ be an algebraic integer with castle $\house{\alpha}^2 \leq C$, which implies $\house{\alpha} \leq \sqrt{C}$. By \Cref{loxton+}, there exist a constant $B = B(K)$ and a finite set $E = E(K) \subset K$ such that $\alpha$ can be written as a linear combination of roots of unity:
\[
\alpha = \sum_{i=1}^b c_i \zeta_i,
\]
where each coefficient $c_i \in E$, the $\zeta_i$ are roots of unity, and the number of terms $b$ satisfies the upper bound
\[
b \leq (\# E) \cdot L(B\house{\alpha}) \leq (\# E) \cdot L(B\sqrt{C}),
\]
for a fixed Loxton function $L$. Define $N = \lfloor (\# E) \cdot L(B\sqrt{C}) \rfloor$.
Consequently, the finite family of linear polynomials
\[
\mathcal{F} = \left\{ P(z_1, \dots, z_b) = \sum_{i=1}^b c_i z_i \;\middle|\; 1 \leq b \leq N, \; c_i \in E \right\}.
\]
is a $C$-covering family over $K$.
\end{proof}

\section{Equidistribution of Galois orbits on tori}

We next introduce an equidistribution theorem for Galois orbits in algebraic tori. This requires a few definitions in order to formulate the result.

\begin{definition}%[Strict Sequence]
A sequence $\{\alpha_k\}$ of points in $(\overline{\QQ}^*)^N$ is \emph{strict} if any proper algebraic subgroup of $(\overline{\mathbb{Q}}^*)^N$ contains $\alpha_k$ for only finitely many values of $k$.
\end{definition}

In connection with the previous definition, we make the following observation.
\begin{lemma} \label{L:Subgroup-Ctbility}
Let $T \cong \mathbb{G}_m^N$ be an algebraic torus. The set of proper closed algebraic subgroups of $T$ is countable.
\end{lemma}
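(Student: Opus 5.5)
The plan is to use the standard correspondence between closed algebraic subgroups of $T = \mathbb{G}_m^N$ and subgroups of its character lattice $X(T) \cong \ZZ^N$. Every closed subgroup $H \subseteq T$ is the intersection of the kernels of the characters that vanish on it. So $H$ is determined by the subgroup
\[
\Lambda_H = \{\chi \in X(T) \colon \chi|_H = 1\} \subseteq \ZZ^N.
\]
Conversely, $H$ is recovered as the common zero locus of the Laurent binomials $z^{a}-1$ for $a \in \Lambda_H$. This reconstruction comes from the standard fact that closed subgroups of diagonalizable groups are diagonalizable and cut out by characters, which I will quote rather than reprove. Granting it, the assignment $H \mapsto \Lambda_H$ is injective.

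The counting step comes next. Every subgroup of $\ZZ^N$ is free of rank at most $N$, so it is generated by at most $N$ vectors in $\ZZ^N$. Hence the set of subgroups of $\ZZ^N$ is the image of $\bigcup_{k=0}^{N} (\ZZ^N)^k$, a countable union of countable sets, and is therefore countable. Because $H \mapsto \Lambda_H$ is injective, the set of all closed subgroups of $T$, and in particular the proper ones, is countable. Properness only requires $\Lambda_H \neq 0$, and we do not even need that.

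If I wish to avoid the structure theorem, there is a cruder fallback. Any closed subvariety of $T$ over $\overline{\QQ}$ is cut out by finitely many Laurent polynomials with coefficients in $\overline{\QQ}$, by Noetherianity of $\overline{\QQ}[z_1^{\pm1},\dots,z_N^{\pm1}]$. The set of finite tuples of such polynomials is countable because $\overline{\QQ}$ is countable. So even the set of all closed subvarieties is countable, and subgroups form a subset of it.

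I expect no real obstacle. The only point needing care is the justification that subgroups are determined by characters, or alternatively that the base field is $\overline{\QQ}$, which is countable. I would present the fallback argument, since it is self-contained; it relies only on the setting of the paper, which works with points of $(\overline{\QQ}^*)^N$.
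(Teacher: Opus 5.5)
Your proposal is correct, and the argument you say you would present (the fallback) takes a genuinely different route from the paper's.

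The paper argues structurally. It writes $H$ as a finite union of cosets of $H^\circ$ and identifies $H^\circ$ as a subtorus determined by a sublattice of $X(T)\cong\ZZ^N$. It then uses divisibility of $H^\circ$ to show that each coset contains a torsion point. So $H$ is determined by a sublattice together with a finite subset of the countable set $\mu^N$.

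Your primary route compresses this into one quoted fact: $H$ is cut out by the characters trivial on it. That is essentially the same structure theory, packaged differently.

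Your fallback ignores the group structure. It counts all Zariski-closed subsets of $T(\overline{\QQ})$ using the Hilbert basis theorem and the countability of $\overline{\QQ}$. This is shorter and fully self-contained, but it has two costs:
\begin{itemize}
\item It depends on the base field being countable. For $T$ over $\CC$ the counting breaks down, whereas both the paper's argument and your character argument still go through.
\item It yields no structural information, and that matters in context. The proof of Corollary~\ref{cor:finite union of torsion cosets} explicitly reuses the paper's proof of this lemma. It needs the fact that each proper closed subgroup is a finite union of torsion translates $gH^\circ$ of a proper subtorus. If you present only the fallback, you must supply that fact separately, for instance via your character-lattice description.
\end{itemize}
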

\begin{proof}
Let $H \subsetneq T$ be a proper closed algebraic subgroup, and let $H^\circ$ denote its identity component. 
Since $H$ is an algebraic variety, it has finitely many connected components, and so $H$ is a finite disjoint union of cosets of $H^\circ$. Since $T$ is a commutative algebraic group consisting of semisimple elements, any connected closed subgroup is again a torus; in particular, $H^\circ$ is a subtorus of $T$. 

We show that there are only countably many such subgroups $H$. First, any subtorus $H^\circ \subseteq T$ is uniquely determined by its annihilator in the character lattice $X(T) \cong \ZZ^N$. Since $\ZZ^N$ is a finitely generated module over the noetherian ring $\ZZ$, it has only countably many subgroups. Thus there are only countably many possibilities for $H^\circ$.

Next, fix such an $H^\circ$ and consider the possible cosets. The group $H/H^\circ$ is finite, so each coset has finite order. Moreover, $H^\circ$, being a torus over an algebraically closed field, is divisible. Therefore any coset can be represented by a torsion point of $T(\overline{\QQ})$: if $x\in T(\overline{\QQ})$ is such that $xH^\circ$ has order $m$, then $x^m \in H^\circ$, and by divisibility there exists $y \in H^\circ(\overline{\QQ})$ such that $y^m = x^m$; then $\zeta = xy^{-1}$ satisfies $\zeta^m = 1$ and represents the same coset.

Finally, the torsion points of $T(\overline{\QQ})$ correspond to elements of $\mu^N$. Since $\mu$ is countable, it follows that the set of torsion points of $T$ is countable. Hence there are only countably many possibilities for the cosets, and therefore only countably many such subgroups $H$.
\end{proof}

\begin{cor} \label{cor:finite union of torsion cosets}
Let $\{\alpha_k\}_k$ be a sequence of points in $(\overline{\QQ}^*)^N$ admitting no infinite subsequence which is strict. 
Then $\{\alpha_k\}_k$
is contained in some finite union of subsets of $(\overline{\QQ}^*)^N$ of the form $gH(\overline{\QQ})$ where $g$ is a torsion point and $H$ is a proper subtorus of $(\overline{\QQ}^*)^N$.
\end{cor}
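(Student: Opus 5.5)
The plan is a diagonal argument over the countable family of proper algebraic subgroups supplied by \Cref{L:Subgroup-Ctbility}, followed by a separate treatment of the finitely many terms the diagonal argument leaves over.

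\emph{Setup.} By \Cref{L:Subgroup-Ctbility}, the proper closed algebraic subgroups of $T=(\overline{\QQ}^*)^N$ can be enumerated as $H_1, H_2, \dots$. The proof of that lemma also shows that each $H_j$ is a finite union of cosets $gH_j^\circ$, with $g$ torsion and $H_j^\circ$ a subtorus. Since $H_j \neq T$ and $T$ is connected, $H_j^\circ$ is a \emph{proper} subtorus. Put $U_m = H_1 \cup \dots \cup H_m$. Each $U_m$ is therefore a finite union of sets of the required form $gH(\overline{\QQ})$.

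\emph{Diagonal argument.} I claim that for some $m$, all but finitely many $\alpha_k$ lie in $U_m$. Suppose not. Then for every $m$ there are infinitely many $k$ with $\alpha_k \notin U_m$. Inductively choose $k_1<k_2<\cdots$ with $\alpha_{k_m}\notin U_m$. Now fix any proper algebraic subgroup $H$; it equals some $H_j$. If $m\ge j$, then $H_j \subseteq U_m$, so $\alpha_{k_m}\notin H_j$. Hence $H$ contains $\alpha_{k_m}$ only for indices $m<j$, which are finitely many. So $\{\alpha_{k_m}\}_m$ is an infinite strict subsequence, contradicting the hypothesis.

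\emph{Leftover terms.} It remains to absorb the finitely many $\alpha_k\notin U_m$, one coset for each. This is the step I expect to be the main obstacle, because in general it does not follow from the hypothesis.
\begin{itemize}
\item For a torsion term, the singleton $\{\alpha_k\}=\alpha_k\cdot\{1\}$ is a torsion coset of the trivial subtorus, which is proper when $N\ge 1$. In the intended application the $\alpha_k$ are tuples of roots of unity coming from \Cref{L:covering family exists}, so this case is the relevant one.
\item More generally, any term lying in some proper algebraic subgroup lies in some $H_j$. By the setup, $H_j$ is a finite union of torsion cosets of the proper subtorus $H_j^\circ$, so the term can be absorbed.
\end{itemize}

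\emph{Caveat.} A term whose coordinates are multiplicatively independent lies in no proper algebraic subgroup. Such a term can occur once in a sequence with no strict infinite subsequence, for example when all other terms equal a fixed torsion point. So the conclusion for the \emph{whole} sequence genuinely relies on the individual terms lying in proper subgroups, which holds for torsion points. When applying the corollary, I would check this for the terms at hand, and in the torsion case the argument above gives the statement exactly as worded.
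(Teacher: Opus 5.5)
Your argument is correct and has the same core as the paper's: a diagonal argument over the enumeration $H_1,H_2,\dots$ of proper algebraic subgroups from \Cref{L:Subgroup-Ctbility}. You are more careful than the paper, and here the care matters.

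\textbf{Where the paper's argument breaks.} The paper negates ``the sequence lies in finitely many proper subgroups.'' It then asserts that for each $v$ one can pick $\alpha_{k_v}\in H_v\setminus(H_1\cup\dots\cup H_{v-1})$. This step is not justified. First, no term need lie in $H_v$ at all. More seriously, the negation only gives, for each $v$, \emph{some} term avoiding $H_1\cup\dots\cup H_{v-1}$. That term may be the same one for every $v$, namely a term lying in no proper subgroup, so no infinite strict subsequence is produced.

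\textbf{Why your version is right.} Your negation, that for every $m$ infinitely many terms avoid $U_m$, is the correct one. It yields only that cofinitely many terms lie in a single $U_m$. The remaining terms must then be absorbed separately, which requires each of them to lie in some proper algebraic subgroup.

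\textbf{Your caveat.} It is a genuine counterexample to the corollary as literally stated. It survives even if one insists on distinct terms: take $N=2$, $\alpha_1=(2,3)$, and $\alpha_k=(\zeta_k,1)$ for distinct roots of unity $\zeta_k$, $k\ge 2$. So the statement needs the extra hypothesis that every term is torsion, or at least lies in a proper algebraic subgroup, and your proof establishes exactly that version.

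\textbf{Sufficiency for the paper.} The corollary is used in \Cref{L:reduce covering family for K}(b) and \Cref{L: non-archimedean reduction}(b). There the $\alpha_k$ enumerate tuples of roots of unity; they do not come from \Cref{L:covering family exists}, as you wrote. Your corrected version therefore suffices for everything the paper needs.
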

\begin{proof}
By Lemma \ref{L:Subgroup-Ctbility}, there are only countably many proper algebraic subgroups of $\mathbb{G}_m^{N}$. Enumerate them as $\{ H_v\}_{v \ge 1}$. We claim that the sequence $\{\alpha_k\}_k$ is contained in finitely many proper algebraic subgroups. Suppose, by way of contradiction, that this is not the case. Then for each $v \ge 1$, we can choose $\alpha_{k_v}$ such that
\[
\alpha_{k_v} \in H_v(\overline{\QQ}) \quad \text{but} \quad \alpha_{k_v} \notin H_s(\overline{\QQ}) \ \text{for all } s < v.
\]
By construction, the subsequence $\{\alpha_{k_v}\}_v$ is strict, contradicting our assumption on the original sequence $\{\alpha_k\}_k$. 

Hence $\{\alpha_k\}_k$ is contained in finitely many proper algebraic subgroups. 
As in the proof of \Cref{L:Subgroup-Ctbility},
each of these subgroups has identity component equal to a proper subtorus of $(\overline{\QQ}^*)^N$
and each coset is generated by a torsion point. This proves the claim.
\end{proof}

\begin{definition}%[Absolute Logarithmic Height]
Given $\alpha = (\alpha^{(1)}, \dots, \alpha^{(N)}) \in (\overline{\QQ}^*)^N$, the \emph{absolute logarithmic height} of $\alpha$ is defined as
\begin{equation}
h(\alpha) := \frac{1}{[K:\QQ]} \sum_{v} [K_v:\QQ_v] \max\left(0, \log|\alpha^{(1)}|_v, \dots, \log|\alpha^{(N)}|_v\right)
\end{equation}
where $K$ is a number field containing the points $\alpha^{(1)}, \dots, \alpha^{(N)}$ and the summation extends over all valuations of $K$, normalized so their restrictions to $\QQ$ define the usual infinite or $p$-adic valuations. (Note that $h(\alpha)$ does not depend on the choice of $K$.)

A sequence $\{\alpha_k\}$ of points in $(\overline{\QQ}^*)^N$ is \emph{small} if $h(\alpha_k) \to 0$ as $k \to \infty$.
\end{definition}

\begin{definition}
A sequence $\{\alpha_k\}$ of points in $(\overline{\QQ}^*)^N$ is \emph{generic} if no infinite subsequence is contained in a proper closed subvariety of $\mathbb{G}_m^N$.
\end{definition}

\begin{definition}%[Galois Probability Measure]
Fix a place $v$ of $\overline{\QQ}$ which may be archimedean or nonarchimedean,
and let $\CC_v$ be the completion of $\overline{\QQ}$ with respect to $v$.
For $\alpha \in (\overline{\QQ}^*)^N$, the probability measure $\overline{\delta}_\alpha$ on $(\CC_v^*)^N$ is defined by
\begin{equation}
\overline{\delta}_\alpha = \frac{1}{[K:\QQ]} \sum_{\sigma: K \hookrightarrow \CC_v} \delta_{\sigma(\alpha)}
\end{equation}
where $K$ is a number field containing the coordinates of $\alpha$,
the summation is extended over all distinct embeddings of $K$ into $\CC_v$, 
and $\delta_{\sigma(\alpha)}$ denotes the Dirac measure at $\sigma(\alpha)$.
(Again, the definition does not depend on the choice of $K$.)
\end{definition}

%\begin{definition}%[Limit Measure $\nu$]
%Let $\nu$ be the probability measure on $(\CC^*)^N$ supported on the unit polycircle $|z_1| = \dots = |z_N| = 1$, where %\end{definition}

We now state an equidistribution theorem, which was proved by Bilu for the torus $(\overline{\QQ}^*)^N$; by Szpiro, Ullmo, and Zhang for abelian varieties; and has since been proven and generalized under different settings. The version we state below is a consequence of a general result by Yuan over number fields.

\begin{theorem} \label{yuan_equidistribution} Let $\{\alpha_k\}$ be an infinite sequence of points in $(\overline{\QQ}^*)^N$ which is small and generic. 
\begin{itemize}
    \item[(a)]
If $v$ is an archimedean place of $\overline{\QQ}$, then the
probability measures $\overline{\delta}_{\alpha_k}$ on $(\CC_v^*)^N$ 
converge weakly to the Haar measure on the unit polycircle $|z_1| = \dots = |z_N| = 1$.
\item[(b)]    
If $v$ is a nonarchimedean place of $\overline{\QQ}$, let $\mathbb{P}^{1,{\mathrm{an}}}(\CC_v)$ denote the analytic projective line in the sense of Berkovich, and let $\zeta_G \in \mathbb{P}^{1,{\mathrm{an}}}(\CC_v)$ be the Gauss point (i.e., the unique boundary point of the disc $|z| \leq 1$).
Then the probability measures $\overline{\delta}_{x_m}$ considered on $(\mathbb{P}^{1,{\mathrm{an}}}(\CC_v))^N$ converge weakly to the Dirac measure $\delta_{\zeta_G^N}$ of $\zeta_G^N = (\zeta_G, \dots, \zeta_G)$.
\end{itemize}
\end{theorem}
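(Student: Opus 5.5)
This theorem is presented as a consequence of Yuan's general equidistribution theorem for small generic sequences on polarized algebraic dynamical systems over number fields. The plan is therefore to reduce to that theorem, not to reprove it.

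The first step is to set up the dynamical system. We take $X = \mathbb{P}^N$, or better $(\mathbb{P}^1)^N$, which compactifies $\mathbb{G}_m^N$. On it we use the squaring endomorphism $\phi(z_1,\dots,z_N) = (z_1^2,\dots,z_N^2)$ and the ample line bundle $\mathcal{O}(1,\dots,1)$, which satisfies $\phi^*\mathcal{O}(1,\dots,1) \cong \mathcal{O}(1,\dots,1)^{\otimes 2}$. Tate's limiting argument then gives the canonical adelic metric. At every place $v$ this metric is the standard one, $\max(1,|z_i|_v)$ in each factor. The associated Néron--Tate (canonical) height is exactly the absolute logarithmic height $h$ of the definition above, up to the harmless choice of $\max$ of coordinates versus sum over factors. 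These two heights are comparable, and both vanish exactly on torsion points. So the notion of a ``small'' sequence in the statement matches Yuan's hypothesis $\hat h(\alpha_k)\to 0$, possibly after passing to the sum-normalized height.

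The second step matches the genericity hypothesis. Yuan requires that no infinite subsequence lies in a proper closed subvariety of $X$. Since $\mathbb{G}_m^N$ is Zariski open and dense in $(\mathbb{P}^1)^N$, a proper closed subvariety of $X$ meets $\mathbb{G}_m^N$ in a proper closed subvariety. Also, all $\alpha_k$ lie in $\mathbb{G}_m^N$, so the two notions of genericity agree.

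The third step is to compute the canonical measure $c_1(\bar L)_v^N$ at each place, since Yuan's theorem says $\overline{\delta}_{\alpha_k}$ converges weakly to this measure.

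For $v$ archimedean, the canonical metric on each $\mathcal{O}(1)$ factor has curvature form $dd^c \log\max(1,|z|)$, which is the normalized Haar measure on $|z|=1$. The $N$-fold product of these currents is Haar measure on the unit polycircle, which gives part (a). One should also remark that Yuan's convergence is tested against continuous functions on $X(\CC)$. Since the limit measure is supported in $(\CC^*)^N$, this yields weak convergence on $(\CC_v^*)^N$ in the required sense.

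For $v$ nonarchimedean, the Chambert-Loir measure of the standard metric on $\mathbb{P}^{1,\mathrm{an}}$ is the Dirac mass at the Gauss point $\zeta_G$. The measure on $(\mathbb{P}^{1,\mathrm{an}})^N$ is the Dirac mass at the point of $((\mathbb{P}^1)^N)^{\mathrm{an}}$ that lies over $(\zeta_G,\dots,\zeta_G)$, namely the Shilov point of the polydisc. This gives part (b), reading $\zeta_G^N$ as that point, and interpreting the $x_m$ in the statement as $\alpha_k$.

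The main obstacle is bookkeeping rather than mathematics: making sure Yuan's hypotheses literally apply in the form stated. In particular:
- the height normalization must agree;
- the metric must be semipositive and adelic, which holds since it is a uniform limit of model metrics;
- the identification of the measures at every place, especially the product and Berkovich point in (b), must be cited or checked via the product formula for Chambert-Loir measures of products of semipositive metrics.

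In (a) one can alternatively cite Bilu's original theorem directly.
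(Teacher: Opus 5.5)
Your proposal is correct and follows the paper's argument. The paper simply invokes Yuan's equidistribution theorem \cite[Theorem~3.7]{yuan} for the squaring map on a compactification of $\mathbb{G}_m^N$, and you make explicit the bookkeeping it leaves implicit: comparability of heights, transfer of genericity, and identification of the canonical measures at each place.

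In (b), you do not need to reinterpret $\zeta_G^N$ as the Shilov point of the polydisc. Push Yuan's convergence on $((\mathbb{P}^1)^N)^{\mathrm{an}}$ forward along the continuous surjection $((\mathbb{P}^1)^N)^{\mathrm{an}}\to(\mathbb{P}^{1,\mathrm{an}}(\CC_v))^N$; this sends that point to $(\zeta_G,\dots,\zeta_G)$ and gives the statement as literally written. This projection, not a closed embedding as the paper's proof phrases it, is the actual relation between the two spaces, and your ``lies over'' picture already has it the right way round.
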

\begin{proof}
Part (a) is the statement of \cite[Theorem, page 605]{yuan}, which is a special case of \cite[Theorem~3.7]{yuan}.
The corresponding special case of \cite[Theorem~3.7]{yuan} in the nonarchimedean case yields (b);
more precisely one gets the analogous conclusion for the analytification of $(\mathbb{P}^1)^N$,
which contains the $N$-th power of $\mathbb{P}^{1,{\mathrm{an}}}(\CC_v)$ as a closed subspace.
\end{proof}

\begin{cor} \label{apply yuan}
Let $\mu \subset \overline{\QQ}^*$ be the group of roots of unity.
Then the conclusion of \Cref{yuan_equidistribution} holds for any strict sequence  $\{\alpha_k\}$ in $\mu^N$.
\end{cor}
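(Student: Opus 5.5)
We verify the two hypotheses of \Cref{yuan_equidistribution}, smallness and genericity, for a strict sequence $\{\alpha_k\}$ in $\mu^N$, and then apply that theorem directly.

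\emph{Smallness.} Every coordinate of $\alpha_k$ is a root of unity, so $|\alpha_k^{(j)}|_v = 1$ for every place $v$ of any number field containing the coordinates. Each term $\max(0, \log|\alpha_k^{(1)}|_v, \dots)$ in the definition of the height is therefore $0$, giving $h(\alpha_k) = 0$ for all $k$. In particular the sequence is small.

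\emph{Genericity.} This is the main step, since strictness only controls proper algebraic subgroups while genericity concerns arbitrary proper closed subvarieties. The input needed is the theorem of Laurent \cite{laurent} (the toric Manin--Mumford statement): for a proper closed subvariety $Z \subsetneq \mathbb{G}_m^N$, the Zariski closure of $Z \cap \mu^N$ is a finite union of torsion cosets $gH$, with $g$ torsion and $H$ a subtorus. Each such coset lies in $Z$, so each $H$ is a proper subtorus, and each $gH$ lies in a proper algebraic subgroup, namely the group generated by $g$ and $H$. That group is a finite union of cosets of $H$, hence proper, since $H$ is proper and $g$ has finite order.

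Now suppose an infinite subsequence of $\{\alpha_k\}$ lies in such a $Z$. Its terms lie in $Z \cap \mu^N$, hence in finitely many proper algebraic subgroups. By pigeonhole, infinitely many terms lie in a single proper algebraic subgroup, which contradicts strictness. So the sequence is generic.

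With both hypotheses verified, \Cref{yuan_equidistribution} gives both (a) and (b).

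If one prefers to avoid citing Laurent's theorem, it can be deduced from \Cref{yuan_equidistribution} itself, as in \cite[\S5]{bilu}. The argument is an induction on $\dim Z$ using the equidistribution of Galois orbits of torsion points inside subtori. That route is more involved, so the plan is to cite \cite{laurent} as the key external input.
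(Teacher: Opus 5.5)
Your proposal is correct and follows the paper's proof. Smallness is immediate from $h(\alpha_k)=0$, and genericity follows from Laurent's theorem: it places any infinite subsequence lying in a proper subvariety inside finitely many proper torsion cosets, hence (by pigeonhole) infinitely many terms in a single proper algebraic subgroup, contradicting strictness; you make explicit the coset-to-subgroup and pigeonhole steps that the paper leaves implicit.
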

\begin{proof}
We must check that the given sequence is small and generic. The small condition is automatic because every $\alpha \in \mu^N$ satisfies $h(\alpha) = 0$. The generic condition holds because by Laurent's theorem \cite{laurent}, any infinite sequence of points of $\mu^N$ lying in a proper closed subvariety of $\mathbb{G}_m^N$ contains an infinite subsequence of points lying in a proper closed subgroup of $\mathbb{G}_m^N$.
\end{proof}

\section{Classification}

Using equidistribution, we next prove our main results about cyclotomic integers, \Cref{T:general classification rational case} and \Cref{T:semicontinuity rational case}. We also obtain some partial results about $K$-cyclotomic integers for a general number field $K$; we will improve upon these results in the next section.

We start with a refinement of \Cref{def:covering family}.
\begin{definition} \label{def:classifying family}
For any number field $K$, define a \emph{$C$-classifying family for $K$} to be a $C$-covering family for which every algebraic integer in $K^{\mathrm{cycl}}$ of the form $P_i(\zeta_1,\dots,\zeta_{n_i})$ for some index $i \in I$ and some roots of unity $\zeta_1,\dots,\zeta_{n_i}$ has castle at most $C$.
As in \Cref{def:covering family}, when $K \neq \QQ$ we do not require each $P_i$ to have algebraic integer coefficients.
\end{definition}

\begin{remark}
In \cite{cassels} and \cite{BDKLLLM}, the classification of small cyclotomic integers is formulated modulo an equivalence relation generated by rotation (i.e., multiplication by a root of unity) and the action of $\mathrm{Gal}(\overline{\QQ}/\QQ)$. Here it will be more convenient to omit this; one can easily recover similar conclusions which are stated modulo this equivalence.
\end{remark}

\begin{definition}
For $\sigma \colon \overline{\QQ} \to \CC$ a field embedding and $P$ a Laurent polynomial with coefficients in $\overline{\QQ}$, let $\sigma(P)$ be the Laurent polynomial over $\CC$ obtained by applying $\sigma$ to each coefficient of $P$.    
\end{definition}

\begin{lemma} \label{L:reduce covering family for K}
Let $K$ be a number field and let $\{P_i\}_{i \in I}$ be a $C$-covering family over $K$.
%(where each $P_i$ is a polynomial with coefficients in $K^{\mathrm{cycl}}$). 
Suppose that $i \in I$ is an index for which
\begin{equation} \label{eq:maximize over circles}
\max\{|\sigma(P_i)(z_1,\dots,z_{n_i})|^2 \colon \sigma\colon K^{\mathrm{cycl}} \hookrightarrow \CC; z_1, \dots, z_{n_i} \in S^1\} > C.    
\end{equation}
\begin{enumerate}
    \item[(a)]
    An infinite sequence $\{\alpha_k\}_k$ of distinct tuples $\alpha_k = (\zeta_1^{(k)},\dots,\zeta_{n_i}^{(k)})$ of roots of unity for which $\house{P_i(\alpha_k)}^2 \leq C$
    cannot be strict in $\mathbb{G}_m^{n_i}$.

    \item[(b)]
There exists a finite set of polynomials $\{Q_j(z_1,\dots,z_{n_j})\}_{j \in J}$ with coefficients in $K^{\mathrm{cycl}}$ such that $n_j < n_i$ for each $j \in J$ and 
\[
\{P_{i'}\}_{i' \in I \setminus \{i\}} \bigcup \{Q_j\}_{j \in J}
\]
is a $C$-covering family over $K$.
\end{enumerate}
%Then there exists a finite set of polynomials $\{Q_j(z_1,\dots,z_{n_j})\}_{j \in J}$ with coefficients in $K^{\mathrm{cycl}}$ such that $n_j < n_i$ for each $j \in J$ and 
\[
\{P_{i'}\}_{i' \in I \setminus \{i\}} \bigcup \{Q_j\}_{j \in J}
\]
%is a $C$-covering family over $K$.
\end{lemma}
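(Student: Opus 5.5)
For part (a), I will argue by contradiction. Suppose $\{\alpha_k\}$ is a strict sequence of distinct tuples of roots of unity with $\house{P_i(\alpha_k)}^2 \leq C$. The first step is to fix an embedding $\sigma$ and a point $z^0 = (z^0_1,\dots,z^0_{n_i}) \in (S^1)^{n_i}$ at which $|\sigma(P_i)(z^0)|^2 > C$; the maximum in \eqref{eq:maximize over circles} is attained because $(S^1)^{n_i}$ is compact and there are only finitely many relevant embeddings. Indeed, $\sigma(P_i)$ depends only on $\sigma$ restricted to the finitely generated field of coefficients of $P_i$. By continuity there is an open neighborhood $U$ of $z^0$ in $(\CC^*)^{n_i}$ on which $|\sigma(P_i)|^2 > C + \delta$ for some $\delta > 0$.

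Next I will apply \Cref{apply yuan} at the archimedean place $v$ determined by $\sigma$. It shows that the measures $\overline{\delta}_{\alpha_k}$ converge weakly to Haar measure on $(S^1)^{n_i}$. Since $U \cap (S^1)^{n_i}$ has positive Haar measure, we get $\overline{\delta}_{\alpha_k}(U) > 0$ for $k$ large, using a continuous bump function supported in $U$. Hence some conjugate $\tau(\alpha_k)$ lies in $U$, where $\tau \colon K_k \hookrightarrow \CC$ and $K_k$ is a number field containing the coefficients of $P_i$ and the coordinates of $\alpha_k$.

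The main subtlety is to align conjugates of the point with conjugates of the coefficients. I will handle this by equidistributing the measures on embeddings $\tau$ of $K_k$ that restrict to $\sigma$ on the coefficient field $F$ of $P_i$. Concretely, I will apply the theorem to the $\mathrm{Gal}(\overline{\QQ}/F)$-orbit rather than the $\QQ$-orbit. Yuan's theorem is stated over number fields, so the version over $F$ is available. Alternatively, I can average over the finitely many restrictions to $F$: the full $\QQ$-orbit measure is an average of these pieces, and since Haar measure is extremal the limit forces each piece to converge as well. Granting this, $\tau$ extends $\sigma$, so $\tau(P_i(\alpha_k)) = \sigma(P_i)(\tau(\alpha_k))$. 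This gives $\house{P_i(\alpha_k)}^2 \geq |\sigma(P_i)(\tau(\alpha_k))|^2 > C$, a contradiction.

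For part (b), consider the set of tuples $\alpha \in \mu^{n_i}$ with $\house{P_i(\alpha)}^2 \leq C$. By (a), no infinite sequence of distinct such tuples has a strict subsequence. By \Cref{cor:finite union of torsion cosets}, the set therefore lies in a finite union of cosets $gH$, with $g$ torsion and $H$ a proper subtorus of dimension $d < n_i$; any finitely many leftover points are also covered this way. For each coset I will choose a parametrization $H \cong \mathbb{G}_m^d$ via monomials $z = g\cdot(w^{m_1},\dots,w^{m_{n_i}})$ with $w \in \mathbb{G}_m^d$. Torsion points of $gH$ then correspond to $g$ times images of torsion points $w \in \mu^d$, because torsion points of a torus are the images of torsion points under the isomorphism. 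I then set $Q_j(w) = P_i(g\cdot w^{m})$, a Laurent polynomial in $d < n_i$ variables whose coefficients remain in $K^{\mathrm{cycl}}$, since $g$ has root-of-unity coordinates. Every $\alpha$ covered by $P_i$ with small castle is then covered by some $Q_j$, and replacing $P_i$ by the $Q_j$ yields the required $C$-covering family. If $K = \QQ$, the integrality of coefficients is preserved.
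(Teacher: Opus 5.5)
Your proof is correct. Part (b) is the paper's argument: non-strictness from (a), then \Cref{cor:finite union of torsion cosets}, a monomial parametrization of each torsion coset, and substitution into $P_i$. Part (a), however, handles the key subtlety by a different mechanism.

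\textbf{Part (a), your route versus the paper's.} You apply Yuan's theorem directly over the coefficient field $F$, at the place $\sigma|_F$. This makes the $\mathrm{Gal}(\overline{\QQ}/F)$-orbit of $\alpha_k$ eventually meet the open set where $|\sigma(P_i)|^2>C$, via an embedding $\tau$ with $\tau|_F=\sigma|_F$. That step is legitimate, because Yuan's Theorem~3.7 is formulated for Galois orbits over an arbitrary number field. It is, however, more than \Cref{yuan_equidistribution} as stated in the paper, whose orbit measures are over $\QQ$.

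The paper instead places the coefficients in $L=K(\zeta_m)$ and lifts to $\beta_k=(\zeta_m,\alpha_k)$. Testing against the characters $w^a z^b$, it shows that the $K$-orbit measures of $\beta_k$ converge to $\nu_W$, the average of Haar measures on the slices $\{w_j\}\times(S^1)^{n_i}$. The only equidistribution input is then the one-dimensional statement for the strict torsion sequences $\zeta_m^a\alpha_k^b$, implicitly over $K$, since the paper normalizes by $[K_k:K]$. A bump-function contradiction like yours then finishes.

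Your route is shorter and handles an arbitrary maximizing embedding $\sigma$ with no extra bookkeeping. The paper's route shows how the relative statement over $K(\zeta_m)$ follows from the statement over $K$ by elementary Fourier analysis.

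\textbf{Your fallback argument.} The justification ``Haar measure is extremal'' does not work as stated. Haar measure is not an extreme point of the convex set of probability measures on $(S^1)^{n_i}$. So knowing that the average of the pieces (indexed by restriction to $F$) converges to Haar does not by itself force each piece to converge to Haar.

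What actually separates the pieces is the paper's twisting device. Let $c_k^{(j)}$ be the $b$-th Fourier coefficient of the piece attached to the conjugate $w_j$ of $\zeta_m$. Knowing $\frac{1}{d}\sum_j w_j^a c_k^{(j)}\to 0$ for every $a$ forces each $c_k^{(j)}\to 0$, because the $w_j$ are distinct. This relies on the extra coordinate being a root of unity, so that $\zeta_m^a\alpha_k^b$ is again a strict torsion sequence. Since your main line does not depend on the fallback, you should simply drop it.
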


\begin{proof}
Because $P_i$ has a finite number of coefficients belonging to $K^{\mathrm{cycl}}$, these coefficients generate a finite extension of $K$. Therefore, they are contained in $L = K(\zeta_m)$ for some primitive $m$-th root of unity $\zeta_m$. We can write $P_i(z_1, \dots, z_{n_i}) = \tilde{P}_i(\zeta_m, z_1, \dots, z_{n_i})$, where $\tilde{P}_i$ is a polynomial with coefficients strictly in $K$.

We fix an embedding $\overline{K} \hookrightarrow \mathbb{C}$, allowing us to identify points in $\mathbb{P}^n(\overline{K})$ with complex points. Let $d = [L:K]$. We denote $d$ distinct $K$-embeddings of $L$ into $\overline{K}$ by $\tau_1, \dots, \tau_d$. Evaluating these on the generator $\zeta_m$ yields $d$ distinct primitive $m$-th roots of unity, $w_j = \tau_j(\zeta_m)$. For any $\sigma \in \text{Gal}(\overline{K}/K)$, $\sigma|_L$ coincides with exactly one of the embeddings $\tau_j$.

Because the coefficients of $\tilde{P}_i$ lie in $K$, applying any $\sigma \in \text{Gal}(\overline{K}/K)$ to $P_i$ yields $\sigma(P_i)(Z) = \tilde{P}_i(\sigma(\zeta_m), Z)$. Since $\sigma(\zeta_m) \in \{w_1, \dots, w_d\}$, hypothesis \eqref{eq:maximize over circles} is equivalent to stating that there exists some integer $j \in \{1, \dots, d\}$ such that $\max_{Z \in (S^1)^{n_i}} |\tilde{P}_i(w_j, Z)|^2 > C$. Up to reordering the embeddings $\tau_j$, we may assume without loss of generality that this maximum occurs at $w_1$.

We prove (a) by way of contradiction.
We lift the sequence to $\beta_k = (\zeta_m, \alpha_k) \in \mathbb{G}_m^{n_i+1}(\overline{K})$. Let $G = S^1 \times (S^1)^{n_i}$ be the ambient compact abelian group. Let $K_k=K(\beta_k)$ and $\text{Aut}(K_k)$ denotes the set of complex embeddings of $K_k$. The uniform probability measure supported on the absolute Galois orbit $O(\beta_k)$ is:
$$\mu_{\beta_k} = \frac{1}{[K_k:K]} \sum_{\sigma \in \text{Aut}(K_k)} \delta_{\sigma(\beta_k)}$$

Let $\nu_{n_i}$ be the normalized Haar measure on $(S^1)^{n_i}$. We define the regular Borel probability target measure $\nu_W$ on $G$ as:
$$\nu_W = \frac{1}{d} \sum_{j=1}^d (\delta_{w_j} \times \nu_{n_i})$$
which is supported on the disjoint union $W = \bigsqcup_{j=1}^d (\{w_j\} \times (S^1)^{n_i})$.

To establish the weak convergence $\bar{\delta}_{\beta_k} \xrightarrow{w} \nu_W$ on $G$, we use the fact that the continuous characters of $G$ span a dense subspace of $C(G)$ (the set of real continuous functions of $G$). The characters are of the form $\chi_{a,b}(w, z) = w^a z^b = w^a z_1^{b_1} \cdots z_{n_i}^{b_{n_i}}$ for $(a,b) \in \mathbb{Z} \times \mathbb{Z}^{n_i}$. It suffices to show that for every character, $\lim_{k \to \infty} \int_G \chi_{a,b} \, d\bar{\delta}_{\beta_k} = \int_G \chi_{a,b} \, d\nu_W$.

If $b = 0$, the character is $\chi_{a,0}(w,z) = w^a$.  The integral with respect to the orbit measure is:
$$\int_G w^a \, d\mu_{\beta_k} = \frac{1}{[K_k : K]} \sum_{\sigma \in \text{Aut}(K_k)} \sigma(\zeta_m)^a$$
Because $L \subseteq K_k$, every embedding $\tau_j$ of $L$ extends to exactly $[K_k : L]$ distinct automorphisms in $\text{Aut}(K_k)$. Partitioning the sum by these restrictions allows the relative degree to factor out:
$$\int_G w^a \, d\mu_{\beta_k} = \frac{1}{[K_k : L] d} \sum_{j=1}^d [K_k : L] \tau_j(\zeta_m)^a = \frac{1}{d} \sum_{j=1}^d w_j^a$$
On the RHS via Fubini's theorem we have:
\[
\int_G w^a \, d\nu_W = \frac{1}{d} \sum_{j=1}^{d} \left( \int_{S^1} w^a \, d\delta_{w_j} \right) \left( \int_{(S^1)^{n_i}} 1 \, d\nu_{n_i} \right) = \frac{1}{d} \sum_{j=1}^{d} w_j^a
\]
and so the desired equality holds.

If $b \neq 0$, the character is $\chi_{a,b}(w,z) = w^a z^b$. The integral is:
$$\int_G w^a z^b \, d\mu_{\beta_k} = \frac{1}{[K_k : K]} \sum_{\sigma \in \text{Aut}(K_k)} \sigma(\zeta_m^a \alpha_k^b)$$
Define the scalar $\gamma_k \in \mathbb{G}_m^1(\overline{K})$ as $\gamma_k = \zeta_m^a \alpha_k^b = \zeta_m^a (\zeta_1^{(k)})^{b_1} \cdots (\zeta_{n_i}^{(k)})^{b_{n_i}}$. The sum is precisely $\int_{S^1} x \, d\bar{\delta}_{\gamma_k}$, where $\bar{\delta}_{\gamma_k}$ is the Galois orbit measure of $\gamma_k$.

By \Cref{apply yuan}, we may apply \Cref{yuan_equidistribution}(a) to deduce that the Galois orbit measures $\bar{\delta}_{\gamma_k}$ converge weakly to the canonical probability measure on $\mathbb{G}_m^1(\mathbb{C})$, which is the normalized Haar measure $\nu_{\text{Haar}}$ on $S^1$. Consequently:
$$\lim_{k \to \infty} \int_G w^a z^b \, d\mu_{\beta_k} = \lim_{k \to \infty} \int_{S^1} x \, d\bar{\delta}_{\gamma_k} = \int_{S^1} x \, d\nu_{\text{Haar}} = 0$$
%\jorge{Why is this equal 0 and why is it needed at this point? Sorry if I am missing something silly or already discussed here}
Concurrently, since $b \neq 0$, the orthogonality of characters on $(S^1)^{n_i}$ implies $\int_{(S^1)^{n_i}} z^b \, d\nu_{n_i} = 0$. Consequently:
\[
\int_G w^a z^b \, d\nu_W = \frac{1}{d} \sum_{j=1}^{d} w_j^a \left( \int_{(S^1)^{n_i}} z^b \, d\nu_{n_i} \right) = 0.
\]

Fix $t>0$, and choose a continuous function $h_t:\mathbb{R}\to\mathbb{R}$ such that
\[
h_t(x)=
\begin{cases}
1 & x \leq C,\\
\in (0,1) & C < x < C+t,\\
0 & x \geq C+t.
\end{cases}
\] 
Define the continuous function $g_{t,i} = h_t \circ |\tilde{P}_i|^2$ on $(S^1)^{n_{i}+1}$.
For any Galois conjugate $Y \in O(\beta_k)$, the condition $\house{P_i(\alpha_k)}^2 \leq C$ ensures that $|\tilde{P}_i(Y)|^2 \leq C$. This forces $g_{t,i}(Y) = 1$ for every $Y \in O(\beta_k)$, yielding:
$$
\int_G g_{t,i} \, d\mu_{\beta_k} = 1.
$$
Conversely, we established that $\max_{Z} |\tilde{P}_i(w_1, Z)|^2 > C$. Therefore, $g_{t,i}$ is strictly less than $1$ on a nonempty open subset of the connected component $\{w_1\} \times (S^1)^{n_i}$. Because $\nu_W$ assigns uniform positive mass $1/d$ to this component, we conclude:
$$
\int_G g_{t,i} \, d\nu_W < 1.
$$
This contradicts the weak convergence, completing the proof of (a).

To prove (b), we may assume that $n_i > 0$, as otherwise we may simply omit $P_i$ and obtoain another $C$-covering family. Let $\{\alpha_k\}_k$ be an enumeration of all of the tuples $\alpha_k = (\zeta_1^{(k)},\dots,\zeta_{n_i}^{(k)})$ of roots of unity for which $\house{P_i(\alpha_k)}^2 \leq C$.
By (a), this sequence admits no strict subsequence. Using \Cref{cor:finite union of torsion cosets}, we deduce that $\{\alpha_k\}_k$ is contained in the union of finitely many subvarieties of the form $g\tilde{H}$ where $g$ is a torsion point of $\mathbb{G}_m^{n_i}$ and $\tilde{H}$ is a proper subtorus.
 
Fix one such torsion subvariety $g\tilde{H}$. By replacing $g\tilde{H}$ with a larger proper torsion subvariety if necessary, we may assume $\dim(g\tilde{H}) = n_i - 1$. Thus there exists a parametrization
\[
\psi\colon \mathbb{G}_m^{n_i-1}(\overline{K}) \to \mathbb{G}_m^{n_i}(\overline{K})
\]
of the form $\psi(y_1,\dots,y_{n_i-1}) = (x_1,\dots,x_{n_i})$, where each $x_i$ is a monomial in the variables $y_1,\dots,y_{n_i-1}$ with coefficient a root of unity, and $\mathrm{Im}(\psi) = g\tilde{H}$.

Restricting $P_i$ to $g\tilde{H}$ yields $P_i|_{g\tilde{H}} = P_i \circ \psi$, which is a polynomial in $n_i - 1$ variables. Because the coefficients of $P_i$ lie in $K^{\mathrm{cycl}}$, and the parametrization $\psi$ introduces only roots of unity (which trivially belong to $K^{\mathrm{cycl}}$), the resulting polynomial has coefficients in $K^{\mathrm{cycl}}$;
moreover, if $K = \QQ$ then the resulting polynomial has algebraic integer coefficients because $P_i$ was assumed to do so. Replacing $P_i$ with the collection of polynomials obtained by varying the torsion subvariety $g\tilde{H}$ yields a new $C$-covering family over $K$ of the desired form. 
\end{proof}

\begin{lemma} \label{L:well ordering}
Let $S$ be the set of sequences $(m_0, m_1, \dots)$ of nonnegative integers with finite support.
Define the following total ordering on $S$:
\[
(m_0, m_1, \dots) < (m'_0, m'_1, \dots) \Longleftrightarrow \mbox{for some $j \geq 0$, } m_j < m'_j \mbox{ and } m_k = m'_k  \mbox{ for all } k > j.
\]
Then this is a \emph{well ordering} on $S$: there are no infinite strictly decreasing sequences. 
\end{lemma}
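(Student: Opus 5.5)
The plan is to prove this by induction on the length of the support, comparing sequences from the highest nonzero index downward (this is the ordinal $\omega^\omega$ written in Cantor normal form). The key observation is that the order is \emph{reverse lexicographic from the top}. For $m \in S$, let $\deg(m)$ be the largest $j$ with $m_j \neq 0$, with $\deg(0) = -1$. Suppose $m \neq m'$, and let $j$ be the largest index at which they differ. Then $m < m'$ exactly when $m_j < m'_j$. This defines a total order: it is irreflexive, any two distinct elements are comparable, and it is transitive, since the largest differing index of $m, m''$ is the maximum of the relevant indices for $(m,m')$ and $(m',m'')$. In particular, $\deg(m) < \deg(m')$ implies $m < m'$.

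Next I would prove by induction on $d \geq -1$ that the set $S_d = \{m \in S : \deg(m) \leq d\}$ has no infinite strictly decreasing sequence. The base case $d=-1$ is trivial, since $S_{-1}$ has one element.

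For the inductive step, take a strictly decreasing sequence $m^{(1)} > m^{(2)} > \cdots$ in $S_d$. Whenever $m^{(k)} > m^{(k+1)}$, the top coordinate satisfies $m^{(k)}_d \geq m^{(k+1)}_d$. Indeed, if the largest differing index is $d$ we get strict inequality, and otherwise the top coordinates are equal. So $(m^{(k)}_d)_k$ is a nonincreasing sequence of nonnegative integers, and it is eventually constant, say from index $k_0$ onward.

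From $k_0$ on, every comparison is decided at indices $< d$. Truncating to the first $d$ coordinates therefore gives a strictly decreasing infinite sequence in $S_{d-1}$, which contradicts the inductive hypothesis.

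Finally, take an arbitrary strictly decreasing sequence in $S$. By the observation in the first paragraph, the degrees $\deg(m^{(k)})$ are nonincreasing. Hence all terms lie in $S_d$ with $d = \deg(m^{(1)})$, and the inductive claim finishes the proof.

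None of these steps is a real obstacle. The one point needing care is checking that the relation is a genuine total order and that truncation preserves strict decrease once the top coordinate has stabilized. Both follow directly from the characterization of the order by the largest differing index.
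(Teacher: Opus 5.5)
Your proof is correct and takes essentially the paper's route. The paper also first uses the first term of a decreasing sequence to confine all terms to bounded support, then appeals to the well-ordering of the lexicographic order on $n$-tuples of nonnegative integers, citing Cox--Little--O'Shea; you prove that standard fact directly instead (the top coordinate is nonincreasing, hence eventually constant, then induct on the degree), and you also check explicitly that the relation is a total order.
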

\begin{proof}
Suppose to the contrary that $S$ contains an infinite strictly decreasing sequence. By comparing with the first term in the sequence, we see that there exists $n \geq 0$ such that
all of the terms $(m_0, m_1, \dots)$ in the sequence  satisfy $m_i = 0$ for $i>n$.
We thus reduce to the assertion that the set of $n$-tuples of nonnegative integers, equipped with the lexicographic ordering, is well-ordered.
For this, see for example \cite[\S 2.2, Proposition 4]{CoxLittleOshea}.
\end{proof}

We now obtain a result that includes \Cref{T:general classification rational case} (by taking $K = \QQ$).
\begin{theorem} \label{T:general classification}
For any number field $K$ and any real number $C>0$, there exists a $C$-classifying family for $K$.
\end{theorem}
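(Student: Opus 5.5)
We start from a $C$-covering family over $K$, which exists by \Cref{L:covering family exists}, and repeatedly apply \Cref{L:reduce covering family for K}(b) until the family becomes a $C$-classifying family. Termination is governed by the well ordering of \Cref{L:well ordering}.

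To a finite family $\{P_i\}_{i\in I}$ we attach the finitely supported sequence $(m_0,m_1,\dots)$, where $m_n$ is the number of indices $i$ with $n_i=n$. The procedure is as follows.

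\begin{enumerate}
\item If some $P_i$ satisfies \eqref{eq:maximize over circles}, invoke \Cref{L:reduce covering family for K}(b). This replaces $P_i$ by finitely many polynomials $Q_j$, each in fewer than $n_i$ variables, and the result is still a $C$-covering family over $K$ (with cyclotomic integer coefficients when $K=\QQ$).
\item In the associated sequence, $m_{n_i}$ drops by one. The entries $m_k$ with $k>n_i$ are unchanged, and only entries with index below $n_i$ can grow. So the new sequence is strictly smaller in the ordering of \Cref{L:well ordering}.
\item Since that ordering admits no infinite strictly decreasing sequence, the procedure stops after finitely many steps. It stops at a $C$-covering family in which no member satisfies \eqref{eq:maximize over circles}.
\end{enumerate}

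It remains to see that this terminal family is $C$-classifying. Every $P_i$ in it satisfies
\[
|\sigma(P_i)(z_1,\dots,z_{n_i})|^2\le C \quad \text{for all embeddings } \sigma\colon K^{\mathrm{cycl}}\hookrightarrow\CC \text{ and all } z\in (S^1)^{n_i}.
\]
Suppose $\alpha=P_i(\zeta_1,\dots,\zeta_{n_i})$ with $\zeta_j\in\mu$. Any conjugate of $\alpha$ has the form $\tau(\alpha)=\tau(P_i)(\tau(\zeta_1),\dots,\tau(\zeta_{n_i}))$ for some embedding $\tau\colon\overline{\QQ}\hookrightarrow\CC$. Each $\tau(\zeta_j)$ lies on $S^1$, and $\tau$ restricts to an embedding of $K^{\mathrm{cycl}}$. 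Hence $|\tau(\alpha)|^2\le C$ for every conjugate, so $\house{\alpha}^2\le C$, which is exactly the classifying condition.

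Two edge cases need checking. A polynomial with $n_i=0$ is a constant $c$; if $|\sigma(c)|^2>C$ for some $\sigma$, it can simply be dropped, as in the proof of \Cref{L:reduce covering family for K}(b). The $K=\QQ$ integrality requirement is preserved at each step, as noted at the end of that proof. Taking $K=\QQ$ then yields \Cref{T:general classification rational case}.

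I expect no serious obstacle, since the analytic content is already contained in \Cref{L:reduce covering family for K}. The only point that needs care is the termination argument: step (b) may create arbitrarily many new polynomials in lower dimensions. That is why a multiset ordering of this kind is required, rather than an induction on the number of polynomials.
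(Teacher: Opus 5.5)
Your proposal is correct and follows essentially the paper's argument. The paper chooses a $C$-covering family whose count sequence $(m_0,m_1,\dots)$ is minimal for the well ordering of \Cref{L:well ordering}, so \Cref{L:reduce covering family for K} cannot apply to any index; your terminating iteration is the same use of well-foundedness, and your explicit checks (that the sequence strictly decreases, and that failure of \eqref{eq:maximize over circles} gives the classifying property) fill in steps the paper leaves implicit.
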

\begin{proof}
    By Lemma~\ref{L:covering family exists}, there exists a $C$-covering family. 
    For each such family, we can form a sequence $(m_0, m_1, \dots)$ where $m_j$ counts the number of indices $i$ with $n_i = j$. By Lemma~\ref{L:well ordering}, there is a minimal such sequence; choose a $C$-covering family
    realizing this minimum. The minimality means that the conclusion of
    Lemma~\ref{L:reduce covering family for K} cannot hold for any index $i$,
    and thus the hypothesis of Lemma~\ref{L:reduce covering family for K} cannot hold for any index $i$. This means that the family in question is in fact a $C$-classifying family.
\end{proof}

By a slight modification, we obtain a result that includes \Cref{T:semicontinuity rational case}.
\begin{theorem} \label{T:semicontinuity general}
For any number field $K$ and any real number $C>0$, there exists $\epsilon> 0$ with the property that there is no $K$-cyclotomic integer with castle in the range $(C, C+\epsilon)$.
\end{theorem}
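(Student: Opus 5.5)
The plan is to run the argument of \Cref{T:general classification} with the bound $C+1$ in place of $C$, and to use the strict inequality in \eqref{eq:maximize over circles} to produce the gap. First, by \Cref{T:general classification} (or its proof), fix a $(C+1)$-covering family $\{P_i\}_{i \in I}$ over $K$ that is minimal for the well ordering of \Cref{L:well ordering}, hence $(C+1)$-classifying. For each $i$ set
\[
M_i = \max\{|\sigma(P_i)(z_1,\dots,z_{n_i})|^2 \colon \sigma\colon K^{\mathrm{cycl}} \hookrightarrow \CC;\ z_1,\dots,z_{n_i} \in S^1\}.
\]
This is a maximum of finitely many continuous functions on a compact torus. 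Indeed, as in the proof of \Cref{L:reduce covering family for K}, $\sigma(P_i) = \tilde P_i(w_j,\cdot)$ for one of finitely many $w_j$. So $M_i$ is a well-defined real number. By minimality, the hypothesis of \Cref{L:reduce covering family for K} fails for every $i$, so $M_i \le C+1$ for all $i$.

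Next I would split the index set $I$ into $I_{\le} = \{i : M_i \le C\}$ and $I_{>} = \{i : M_i > C\}$. If $\alpha$ is a $K$-cyclotomic integer with $\house{\alpha}^2 \le C+1$, it has the form $P_i(\zeta)$ for some $i$. When $i \in I_{\le}$, every conjugate of $\alpha$ is some $\sigma(P_i)$ evaluated at a point of $(S^1)^{n_i}$, so $\house{\alpha}^2 \le C$. It therefore suffices to show that for each $i \in I_{>}$ there is $\epsilon_i>0$ such that no value $P_i(\zeta)$ that is an algebraic integer has castle in $(C, C+\epsilon_i)$. Then $\epsilon = \min(1, \min_i \epsilon_i)$ works.

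The main step is to fix $i \in I_{>}$ and argue by contradiction, following \Cref{L:reduce covering family for K}(a) with a modified test function. Suppose there are infinitely many distinct tuples $\alpha_k$ with $P_i(\alpha_k)$ integral and castles $c_k \in (C, C+1/k)$. (Repeated tuples give the same value, so only finitely many tuples could land in any such window unless there are infinitely many distinct ones.) Choose $t>0$ so small that $M_i > C+2t$, and take $h$ continuous with $h=1$ on $(-\infty, C+t]$ and $h<1$ on $(C+2t,\infty)$. Once $1/k < t$, every conjugate of $P_i(\alpha_k)$ has squared absolute value at most $C+t$. The same equidistribution argument as in \Cref{L:reduce covering family for K}(a) then shows that $(\alpha_k)$ has no strict subsequence. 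By \Cref{cor:finite union of torsion cosets}, the tuples lie in finitely many torsion cosets $g\tilde H$ of lower dimension. This means $P_i$ can be replaced by the finitely many restrictions $P_i\circ\psi$ in fewer variables for those $\alpha$ with castle in $(C,C+t]$.

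The obstacle I expect is closing this into a clean induction: a single pass lowers only the number of variables, and the new polynomials may again have $M > C$. So I would set up the statement to prove by induction on $n$ (or via the ordering of \Cref{L:well ordering}) as: for any finite family of Laurent polynomials with coefficients in $K^{\mathrm{cycl}}$ in at most $n$ variables, there is $\epsilon>0$ such that no integral value at roots of unity has castle in $(C, C+\epsilon)$. The case $n=0$ is trivial, since the values are then finitely many constants. For the inductive step, I would treat the polynomials with $M \le C$ as above. For those with $M>C$, the argument just given produces a lower-variable family covering all values with castle in $(C, C+t]$, and the inductive hypothesis applied to that family gives the required $\epsilon$, which I then shrink to be at most $t$. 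The point needing the most care is that the choice of $t$ depends only on $M_i$, not on $k$, so the contradiction argument gives a fixed window.
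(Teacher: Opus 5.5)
Your argument is correct and is essentially the paper's. Both run the reduction of \Cref{L:reduce covering family for K} at a threshold strictly between $C$ and the offending maximum, and both terminate because each reduction replaces a polynomial by finitely many in fewer variables. The only difference is bookkeeping: the paper reduces at the dyadic thresholds $C+2^{-n}$ and gets stabilization from \Cref{L:well ordering}, while you choose a margin $t_P < M_P - C$ for each polynomial and induct on the number of variables (so the minimality of the initial $(C+1)$-covering family and the contradiction sequence with castles in $(C, C+1/k)$ are not actually needed).
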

\begin{proof}
    By Lemma~\ref{L:covering family exists}, there exists a $(C+1)$-covering family for $K$. Now for $n=1,2,\dots$ in succession,
    if possible, update the family by applying Lemma~\ref{L:reduce covering family for K} with the constant $C$ replaced by $C+2^{-n}$, otherwise make no change. By Lemma~\ref{L:well ordering}, there is a smallest index $n$ beyond which the resulting sequence of families stabilizes. The stabilized value is then a $C$-classifying family for $K$ which is also a $(C+2^{-n})$-covering family for $K$; this proves the claim.
\end{proof}

\section{Nonarchimedean analogue}
\label{sec:nonarchimedean}

Throughout this section, fix a number field $K$.
We next formulate a more flexible setup to remedy the issue raised in \Cref{rem:loxton general case}. The idea is to put the nonarchimedean places of $K$ on an equal footing with the archimedean places, and use a similar strategy to replace polynomials that do not always evaluate to $K$-cyclotomic integers.

\begin{definition}
Let $\mathbf{C} = (C_v)_v$ be a tuple of real numbers indexed by all places of $K$. We say a tuple $\mathbf{C}=(C_v)_v$ is \textit{valid} if $C_v > 0$ for all $v$ and $C_v = 1$ for all but finitely many $v$. For a valid tuple $\mathbf{C}$, we say an element $\alpha \in K^{\mathrm{cycl}}$ is \emph{$\mathbf{C}$-bounded} if, for every place $v$ of $K$ and every place $w$ of $K^{\mathrm{cycl}}$ above $v$, we have $\vert{}\alpha\vert{}_w \leq C_v$.

This definition relates to previous ones as follows. Take $K = \QQ$ and let $\mathbf{C}$ be a tuple with $C_v = c$ for $v$ archimedean and $C_v = 1$ for $v$ nonarchimedean. Then $\alpha \in \QQ^{\mathrm{cycl}}$ is $\mathbf{C}$-bounded if and only if it is a cyclotomic integer with house bounded above by $c$.
\end{definition}

We translate \Cref{def:covering family} into this language as follows.
\begin{definition} \label{def:covering family number field}
For a valid tuple $\mathbf{C}$, a finite set of polynomials $\{P_i(z_1,\dots,z_{n_i})\}_{i \in I}$ with coefficients in $K^{\mathrm{cycl}}$ is a \emph{$\mathbf{C}$-covering family (over $K$)} if every $\mathbf{C}$-bounded element $\alpha \in K^{\mathrm{cycl}}$ can be written as $P_i(\zeta_1,\dots,\zeta_{n_i})$ for some index $i \in I$ and some roots of unity $\zeta_1,\dots,\zeta_{n_i}$.
\end{definition}

We again obtain covering families directly from Loxton's theorem.
\begin{lemma} \label{L:nonarch covering exists}
For any valid tuple $\mathbf{C} = (C_v)_v$, there exists a $\mathbf{C}$-covering family over $K$.
\end{lemma}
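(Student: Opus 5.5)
The plan is to reduce to \Cref{L:covering family exists} by scaling. $\mathbf{C}$-bounded elements need not be algebraic integers, since $C_v$ may exceed $1$ at finitely many nonarchimedean places. We clear these denominators with a fixed element of $K$ and control the archimedean size uniformly.

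Let $S$ be the finite set of nonarchimedean places $v$ with $C_v > 1$. For each $v \in S$, lying over a rational prime $p_v$, choose an integer $e_v \geq 0$ with $|p_v^{e_v}|_v \cdot C_v \leq 1$, and put $D = \prod_{v \in S} p_v^{e_v} \in \ZZ_{>0}$. Now let $\alpha$ be $\mathbf{C}$-bounded and set $\beta = D\alpha$. For every nonarchimedean place $w$ of $K^{\mathrm{cycl}}$ over a place $v$ of $K$, we have $|\beta|_w = |D|_v |\alpha|_w \leq |D|_v C_v$. If $v \in S$, this is at most $|p_v^{e_v}|_v C_v \leq 1$, since the other prime factors of $D$ have $v$-adic absolute value $1$. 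If $v \notin S$, it is at most $1 \cdot 1$. Hence $|\beta|_w \leq 1$ at every finite place, so $\beta$ is an algebraic integer in $K^{\mathrm{cycl}}$.

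At archimedean places, every complex embedding of $K^{\mathrm{cycl}}$ restricts to an archimedean place $v$ of $K$. The corresponding absolute value is the usual one, possibly after normalization; I will take the convention that $|\cdot|_w$ is the standard complex absolute value, and if the paper normalizes complex places by squaring, replace $C_v$ by $\sqrt{C_v}$. So $\house{\beta} \leq D \max_{v \mid \infty} C_v =: c$, and $\house{\beta}^2 \leq c^2$.

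By \Cref{L:covering family exists} there is a $c^2$-covering family $\{P_i\}_{i \in I}$ over $K$, with coefficients in $K^{\mathrm{cycl}}$. Then $\beta = P_i(\zeta_1, \dots, \zeta_{n_i})$ for some $i$ and some roots of unity, so $\alpha = D^{-1}P_i(\zeta_1, \dots, \zeta_{n_i})$. Therefore $\{D^{-1}P_i\}_{i \in I}$ is a finite family of polynomials with coefficients in $K^{\mathrm{cycl}}$, and it is a $\mathbf{C}$-covering family over $K$. No integrality of coefficients is required in \Cref{def:covering family number field}.

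The only delicate points are bookkeeping. First, the normalization of $|\cdot|_w$ at complex and at nonarchimedean places must be fixed. Any normalization satisfying $|D|_v<1$ exactly at the primes dividing $D$ still works after enlarging the $e_v$. Second, the real archimedean places with $C_v < 1$ do no harm, since we only need an upper bound. I expect no genuine obstacle, as the substance lies entirely in \Cref{loxton+}.
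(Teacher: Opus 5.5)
Your proof is correct and is essentially the paper's argument. You clear the finitely many nonarchimedean denominators with a fixed nonzero integer (the paper takes $d \in \mathcal{O}_K$, you take $D \in \ZZ$), bound the house of the scaled element, apply Dvornicich--Zannier/Loxton (the paper cites \Cref{loxton+} directly, while you go through \Cref{L:covering family exists}, which is the same thing), and divide the resulting family by the scaling factor. One small wording fix: if two places of $S$ lie over the same prime, the other factors of $D$ have $v$-adic absolute value at most $1$ rather than exactly $1$, and that weaker bound is all you need.
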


\begin{proof}
Let $S_{\infty}$ denote the set of archimedean places of $K$, and let $S_{\mathrm{fin}}$ denote the finite set of non-archimedean places $v$ for which $C_v > 1$. 
Since $S_{\mathrm{fin}}$ is finite, we can choose a nonzero algebraic integer $d \in \mathcal{O}_K$ such that $|d|_v \leq C_v^{-1}$ for all $v \in S_{\mathrm{fin}}$. 

Now let $\alpha \in K^{\mathrm{cycl}}$ be any element satisfying $|\alpha|_w \leq C_v$ for all places $v$ of $K$ and all $w \mid v$. 
Consider the product $\beta = d\alpha$. For any non-archimedean place $v$ of $K$ and any $w \mid v$:
\begin{itemize}
    \item If $v \in S_{\mathrm{fin}}$, we have $|\beta|_w = |d|_w |\alpha|_w \leq C_v^{-1} C_v = 1$.
    \item If $v \notin S_{\mathrm{fin}}$, then $|d|_w \leq 1$ (since $d \in \mathcal{O}_K$) and $|\alpha|_w \leq C_v \leq 1$, so $|\beta|_w \leq 1$.
\end{itemize}
Since $|\beta|_w \leq 1$ for all non-archimedean places $w$, $\beta$ is an algebraic integer in $K^{\mathrm{cycl}}$. Furthermore, at the archimedean places, the house of $\beta$ is strictly bounded:
\[
\house{\beta} \leq \house{d} \cdot \max_{v \in S_{\infty}} C_v := M.
\]

Fix a Loxton function $L$.
By \Cref{loxton+}, there exist a constant $B = B(K)$ and a finite set $E = E(K) \subset K$ such that the algebraic integer $\beta$ can be written as a linear combination of roots of unity:
\[
\beta = \sum_{i=1}^b c_i \zeta_i,
\]
where $c_i \in E$, the $\zeta_i$ are roots of unity, and the length satisfies $b \leq (\# E) \cdot L(B M)$.
Let $N = \lfloor (\# E) \cdot L(B M) \rfloor$. We define the finite family of linear polynomials:
\[
\mathcal{F} = \left\{ P(z_1, \dots, z_b) = \sum_{i=1}^b \frac{c_i}{d} z_i \;\middle|\; 1 \leq b \leq N, \; c_i \in E \right\}.
\]
Since $E \subset K$ and $d \in K$, each polynomial in $\mathcal{F}$ has coefficients in $K \subset K^{\mathrm{cycl}}$. By construction, $\mathcal{F}$ is a finite $\mathbf{C}$-covering family over $K$.
\end{proof}

We next give the translation of \Cref{def:classifying family}, or more precisely its specialization to the case $K = \QQ$.
\begin{definition} \label{def:classifying family number field}
A $\mathbf{C}$-covering family $\{P_i\}_{i \in I}$ is a \emph{$\mathbf{C}$-classifying family} if, conversely, for every index $i \in I$ and every choice of roots of unity $\zeta_1,\dots,\zeta_{n_i}$, $P_i(\zeta_1,\dots,\zeta_{n_i})$ is $\mathbf{C}$-bounded.
\end{definition}

We finally give an analogue of \Cref{L:reduce covering family for K}, with a parallel proof.
\begin{lemma} \label{L: non-archimedean reduction}Let $\mathbf{C} = (C_v)_v$ be a valid tuple, and let $\{P_i\}_{i \in I}$ be a $\mathbf{C}$-covering family over $K$.
%(where each $P_i$ is a polynomial with coefficients in $K^{\mathrm{cycl}}$). 
Suppose that $i \in I$ is an index for which there exist a place $v$ of $K$ and a place $w$ of $K^{\mathrm{cycl}}$ lying above $v$ such that
$$
\sup\left\{ \vert{}\sigma(P_i)(\zeta_1,\dots,\zeta_{n_i})\vert{}_w \colon \sigma\colon K^{\mathrm{cycl}} \hookrightarrow \CC; \zeta_1, \dots, \zeta_{n_i} \in \mu \right\} > C_v,
$$
where $\mu \subset K^{\mathrm{cycl}}$ denotes the group of roots of unity.
\begin{enumerate}
    \item[(a)]
An infinite sequence $\{\alpha_k\}_k$ of distinct tuples $\alpha_k = (\zeta_1^{(k)},\dots,\zeta_{n_i}^{(k)})$ of roots of unity for which $|{P_i(\alpha_k)}|_w \leq C_v$
cannot be strict in $\mathbb{G}_m^{n_i}$.
    \item[(b)]
There exists a finite set of polynomials $\{Q_j(z_1,\dots,z_{n_j})\}_{j \in J}$ with coefficients in $K^{\mathrm{cycl}}$ such that $n_j < n_i$ for each $j \in J$, and$$\{P_{i'}\}_{i' \in I \setminus \{i\}} \bigcup \{Q_j\}_{j \in J}$$is a $\mathbf{C}$-covering family over $K$.
\end{enumerate}
\end{lemma}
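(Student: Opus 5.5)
The proof will follow that of \Cref{L:reduce covering family for K}, splitting by whether $v$ is archimedean; part (b) will follow from part (a) exactly as before. We write $P_i(z) = \tilde P_i(\zeta_m, z)$ with $\tilde P_i$ having coefficients in $K$, and set $L = K(\zeta_m)$. Let $w_1,\dots,w_d$ be the images of $\zeta_m$ under the $K$-embeddings of $L$ into $\CC_v$; here $\CC_v$ means $\CC$ when $v$ is archimedean. The hypothesis then says that for some $j$, the function $|\tilde P_i(w_j,\cdot)|_v$ exceeds $C_v$ at some point of $\mu^{n_i}$. Without loss of generality $j=1$.

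For (a), argue by contradiction: suppose $\{\alpha_k\}$ is strict, and lift to $\beta_k = (\zeta_m,\alpha_k)$. \emph{Archimedean case.} The character computation from the proof of \Cref{L:reduce covering family for K} goes through verbatim, using \Cref{apply yuan} and \Cref{yuan_equidistribution}(a). It shows that the Galois orbit measures of $\beta_k$ converge weakly to $\nu_W = \frac1d\sum_j \delta_{w_j}\times\nu_{n_i}$. We then compose $|\tilde P_i|_v^2$ with a cutoff $h_t$ as before. By continuity, the supremum over $\mu^{n_i}$ equals the maximum over $(S^1)^{n_i}$, so the cutoff integrates to $1$ against every orbit measure but to less than $1$ against $\nu_W$, a contradiction. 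One small point needs care: the hypothesis bounds $|P_i(\alpha_k)|_w$ at a single place $w$, whereas the measure uses every conjugate above $v$. We should therefore read the bound as holding at all $w\mid v$, as in $\mathbf{C}$-boundedness, which is what the application in (b) needs. Alternatively we restrict the orbit measure to the $\sigma$ that are $K$-embeddings, which is harmless since $\tilde P_i$ has coefficients in $K$.

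\emph{Nonarchimedean case.} This is the main obstacle. Take $\CC_v$ with $v$ finite, and use \Cref{yuan_equidistribution}(b) for the $\gamma$-coordinates. The orbit measures of $\alpha_k$ converge to the Dirac mass at the Gauss point $\zeta_G^{n_i}$; the fixed first coordinate contributes $\frac1d\sum\delta_{w_j}$. The key computation is that $|\tilde P_i(w_1,\cdot)|$, extended to the Berkovich space, is continuous. At the Gauss point it equals the Gauss norm $\|\tilde P_i(w_1,z)\|$, i.e.\ the maximum of the absolute values of the coefficients. By the ultrametric inequality this dominates $|\tilde P_i(w_1,\zeta)|_v$ for every $\zeta\in\mu^{n_i}$. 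Hence the Gauss norm is at least the supremum in the hypothesis, which is $> C_v$.

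To conclude, pick a continuous cutoff $g = h_t\circ|\tilde P_i|$ on $\{w_1,\dots,w_d\}\times(\mathbb{P}^{1,\mathrm{an}})^{n_i}$. Here $|\tilde P_i|$ is a continuous function away from the poles; since the $\alpha_k$ and the Gauss point lie in the closed unit polydisc, we may restrict to that compact set. Each orbit measure integrates $g$ to $1$. The limit measure gives mass $1/d$ to the point $(w_1,\zeta_G^{n_i})$, where $g<1$, so its integral is $<1$ — contradicting weak convergence. The delicate checks here are the continuity of seminorm-evaluation and the applicability of weak convergence on the product with the finite set of $w_j$. For the latter we decompose by restriction of $\sigma$ to $L$ as in the archimedean case, and apply \Cref{yuan_equidistribution}(b) to the strict sequence $\alpha_k$ itself. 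Strictness is unchanged by adjoining the fixed torsion coordinate $\zeta_m$ via the $\tau_j$, since the conjugates are just rescaled by fixed roots of unity.

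For (b), enumerate all $\alpha\in\mu^{n_i}$ with $P_i(\alpha)$ $\mathbf{C}$-bounded. These satisfy the bound at the given $w$, so by (a) there is no strict subsequence. \Cref{cor:finite union of torsion cosets} then places them in finitely many torsion cosets $g\tilde H$ of codimension $1$. Parametrize each by monomial maps with root-of-unity coefficients, and replace $P_i$ by the compositions $P_i\circ\psi$, which have $n_i-1$ variables and coefficients in $K^{\mathrm{cycl}}$. No integrality of coefficients is required in this setting, so nothing further needs checking.
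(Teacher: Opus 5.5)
Your proposal is correct and follows the paper's proof essentially step for step. In the archimedean case you argue as in \Cref{L:reduce covering family for K}; in the nonarchimedean case you decompose the orbit sum of $\beta_k=(\zeta_m,\alpha_k)$ by restriction to $L$, apply \Cref{yuan_equidistribution}(b) to get the limit $\frac1d\sum_j\delta_{w_j}\times\delta_{\zeta_G^{n_i}}$, and reach a contradiction with the cutoff $h_t\circ|\tilde P_i|$. Your Gauss-norm/ultrametric remark supplies the justification the paper leaves implicit for $|\tilde P_i(w_1,\zeta_G^{n_i})|>C_v$.

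You are also right that (a) has to be read with the bound at every $w\mid v$, as the paper's proof tacitly does. With a single $w$ the statement fails already for $K=\QQ$, $C_\infty=1$, $P=1+z$, $\alpha_k=-e^{2\pi i/k}$ ($k\geq 6$). However, your suggested alternative of restricting to $K$-embeddings would not fix this. Those embeddings still realize all places of $K(\beta_k)$ above $v$, not only the one below $w$.
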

\begin{proof}
The proof of (b) remains unchanged from Lemma \ref{L:reduce covering family for K},
as does the proof of (a) in case $v$ is archimedean; we thus focus on proving (a) when $v$ is 
nonarchimedean.
For simplicity we denote the absolute value $|\cdot|_w$ by $|\cdot|$.
We also set the notations  $L, \tilde{P}_i, \zeta_m$ as in the proof of Lemma \ref{L:reduce covering family for K}.

We fix an embedding $\overline{K} \hookrightarrow \CC_v$
and lift the sequence to $\beta_k = (\zeta_m, \alpha_k) \in (\mathbb{P}^{1,{\mathrm{an}}}(\mathbb{C}_v))^{n_i+1}$. Let $ \mathbb{P}^{1,{\mathrm{an}}}(\mathbb{C}_v) \times (\mathbb{P}^{1,{\mathrm{an}}}(\mathbb{C}_v))^{n_i}$ be the ambient compact space. Let $K_k:=K(\beta_k)$ and let $\text{Aut}(K_k)$ denote the set of embeddings of $K_k$ into $\CC_v$. The uniform probability measure supported on the absolute Galois orbit $O(\beta_k)$ is:
$$
\mu_{\beta_k} = \frac{1}{[K_k:K]} \sum_{\sigma \in \text{Aut}(K_k)} \delta_{\sigma(\beta_k)}.
$$

Let $\nu_{n_i}$ be the Dirac measure of $(\zeta_G,...,\zeta_G)$  in $(\mathbb{P}^{1,\mathrm{an}}(\CC_v))^{n_i}$ where $\zeta_G$ is the Gauss point of $\mathbb{P}^{1,\mathrm{an}}(\CC_v)$. We define the  probability target measure $\nu_W$ as:
$$
\nu_W = \frac{1}{d} \sum_{j=1}^d (\delta_{w_j} \times \nu_{n_i})
$$
which is supported on the disjoint union $W = \bigsqcup_{j=1}^d (\{w_j\} \times \{(\zeta_G,...,\zeta_G)\})$.

We will prove the weak convergence of measures $\bar{\delta}_{\beta_k} \xrightarrow{w} \nu_W$. Write $X=\mathbb{P}^{1,{\mathrm{an}}}(\mathbb{C}_v)$ and $Y=X^{n_i}$, so that $X\times Y$ is a compact and Hausdorff space; it suffices to prove the corresponding convergence of integrals for functions of the form $f(x,y)=g(x)h(y)$ where $g$ is continuous on $X$ and $h$ is continuous on $Y$, as such functions are dense in the set of continuous functions on $X\times Y$. 
We have
\begin{equation} \label{eq:factor}
\begin{split}\int_{X\times Y} g(x)h(y) \, d\bar{\delta}_{\beta_k} &= \frac{1}{[K_k : L] d} \sum_{j=1}^d \sum_{\sigma \in \text{Gal}(\overline{K}/L)} g(\tau_j(\sigma(\zeta_m)))\cdot h(\tau_j(\sigma(\alpha_k)))\\
&=\dfrac{1}{d} \sum_{j=1}^d g(w_j)\left[ \dfrac{1}{[K_k:L]}\sum_{\sigma \in \text{Gal}(\overline{K}/L)}h(\tau_j(\sigma(\alpha_k)))\right].\end{split}
\end{equation}
By \Cref{apply yuan}, we may apply \Cref{yuan_equidistribution}(b) to deduce that the inner term in \eqref{eq:factor}
converges weakly to $\int_Y h(y) d\nu_{n_i}= h(\zeta_G,...,\zeta_G)$ as $k \rightarrow \infty$. We thus have
$$
\lim_{k \rightarrow \infty} \int_{X\times Y} g(x)h(y) \, d\bar{\delta}_{\beta_k}=\dfrac{1}{d} \sum_{j=1}^d g(w_j)h(\zeta_G,...,\zeta_G)=\int_{W} g(x)h(y) d \nu_W
$$ 
and the claim is proven.

Now, fix $t>0$, and choose a continuous function $h_t:\mathbb{R}\to\mathbb{R}$ such that
\[
h_t(x)=
\begin{cases}
1 & x \leq C_v,\\
\in (0,1) & C_v < x < C_v+t,\\
0 & x \geq C_v+t.
\end{cases}
\] 
Define the continuous function $g_{t,i} = h_t \circ |\tilde{P}_i|$ on $(\mathbb{P}^{1,\mathrm{an}}(\mathbb{C}_v))^{n_{i}+1}$.
For any Galois conjugate $Y \in O(\beta_k)$, the condition $|{P_i(\alpha_k)}| \leq C_v$ ensures that $|\tilde{P}_i(Y)| \leq C_v$. This forces $g_{t,i}(Y) = 1$ for every $Y \in O(\beta_k)$, yielding:
$$
\int_G g_{t,i} \, d\mu_{\beta_k} = 1.
$$
Conversely, we have arranged that $\max_{Z} |\tilde{P}_i(w_1, Z)| > C_v$ and so $|\tilde{P}_i(w_1, \zeta_G,...,\zeta_G)|>C_v$. Therefore, $g_{t,i}$ is strictly less than $1$ on  the connected component $\{w_1\} \times (\zeta_G,...,\zeta_G)$. Because $\nu_W$ assigns uniform positive mass $1/d$ to this component, we conclude:
$$
\int_G g_{t,i} \, d\nu_W < 1.
$$
This contradicts the weak convergence, completing the proof of (a).
\end{proof}

We now conclude as before to obtain generalizations of 
\Cref{T:general classification} and \Cref{T:semicontinuity general}
(and by extension \Cref{T:general classification rational case} and \Cref{T:semicontinuity rational case}).

\begin{theorem} \label{thm:general classification}
For each place $v$ of $K$, fix a positive real number $C_v$ in such a way that all but finitely many of the $C_v$ are equal to $1$. Then we can find a finite list of polynomials $P_i(z_1,\dots,z_{n_i})$ with coefficients in $K^{\mathrm{cycl}}$ such that for any $\alpha \in K^{\mathrm{cycl}}$, the following two statements are equivalent.
\begin{itemize}
    \item 
    We can write $\alpha = P_i(\zeta_1,\dots,\zeta_{n_i})$ for some index $i$ and some roots of unity $\zeta_1, \dots, \zeta_{n_i}$.
    \item 
    For every place $v$ of $K$ and every place $w$ of $K^{\mathrm{cycl}}$ above $v$,
    $|\alpha|_w \leq C_v$.
\end{itemize}

\end{theorem}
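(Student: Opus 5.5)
The plan is to run the same well-ordering argument as in the proof of \Cref{T:general classification}, now in the nonarchimedean setup. Set $\mathbf{C} = (C_v)_v$; by hypothesis this is a valid tuple. By \Cref{L:nonarch covering exists} there exists a $\mathbf{C}$-covering family over $K$.

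To each $\mathbf{C}$-covering family $\{P_i\}_{i\in I}$ we attach the finitely supported sequence $(m_0,m_1,\dots)$, where $m_j$ is the number of indices $i$ with $n_i = j$. By \Cref{L:well ordering}, among all $\mathbf{C}$-covering families there is one whose sequence is minimal, and we fix such a family.

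We claim this minimal family is $\mathbf{C}$-classifying in the sense of \Cref{def:classifying family number field}, and argue by contradiction. Suppose some $P_i$ and some roots of unity $\zeta_1,\dots,\zeta_{n_i}$ give a value $P_i(\zeta_1,\dots,\zeta_{n_i})$ that is not $\mathbf{C}$-bounded. Then for some place $v$ of $K$ and some $w \mid v$ we have $|P_i(\zeta_1,\dots,\zeta_{n_i})|_w > C_v$. Taking $\sigma$ to be the identity embedding, the supremum in the hypothesis of \Cref{L: non-archimedean reduction} exceeds $C_v$. Part (b) of that lemma then replaces $P_i$ by finitely many polynomials in strictly fewer variables and still yields a $\mathbf{C}$-covering family. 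The new sequence decreases $m_{n_i}$ by one and changes only entries with smaller index, so it is strictly smaller in the ordering of \Cref{L:well ordering}. This contradicts minimality.

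Hence every value $P_i(\zeta_1,\dots,\zeta_{n_i})$ is $\mathbf{C}$-bounded. This gives the implication from the first bullet to the second, and the covering property gives the converse, so the two statements are equivalent.

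The main subtlety is matching hypotheses: \Cref{L: non-archimedean reduction} is phrased with a supremum over embeddings $\sigma$ and over tuples in $\mu$. A single failing evaluation supplies a witness directly, since a place $w$ of $K^{\mathrm{cycl}}$ is given and the identity $\sigma$ can be used. One caveat is that in the lemma, conjugation by $\sigma$ changes which place is being tested. This causes no difficulty, because $\mathbf{C}$-boundedness quantifies over all $w\mid v$, and the lemma only needs some $w$ at which the bound fails.

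Variables with $n_i=0$ are constants. For such a constant, failure of the bound means the polynomial covers nothing, and it is simply dropped, which is also covered by part (b).
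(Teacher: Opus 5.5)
Your proposal is correct and follows the paper's proof. You take a $\mathbf{C}$-covering family from Lemma~\ref{L:nonarch covering exists} whose count sequence $(m_0,m_1,\dots)$ is minimal for the ordering of Lemma~\ref{L:well ordering}, and note that any value failing $\mathbf{C}$-boundedness would let Lemma~\ref{L: non-archimedean reduction}(b) produce a strictly smaller sequence; beyond the paper, you spell out why a single failing evaluation meets the lemma's hypothesis, why the new sequence is strictly smaller, and the constant case $n_i=0$.
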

\begin{proof}
    By Lemma~\ref{L:nonarch covering exists}, there exists a finite $\mathbf{C}$-covering family over $K$. We associate to each such covering family $\{P_i\}_{i \in I}$ a sequence of nonnegative integers $(m_0, m_1, \dots)$ with finite support, where each term $m_j$ counts the number of indices $i \in I$ with $n_i = j$.  By Lemma~\ref{L:well ordering}, there is a minimal such sequence; choose a $\mathbf{C}$-covering family realizing this minimum. By minimality, the conclusion of Lemma~\ref{L: non-archimedean reduction} cannot hold for any index $i \in I$. Consequently, the hypothesis of Lemma~\ref{L: non-archimedean reduction} cannot hold for any index $i \in I$. Thus, this minimal $\mathbf{C}$-covering family is simultaneously a $\mathbf{C}$-classifying family.
\end{proof}

\begin{theorem}[Uniform Semicontinuity] \label{T:uniform gap}
Let $K$ be a number field, and let $\mathbf{C} = (C_v)_v$ be a valid tuple. Let $S$ denote the finite set of places of $K$ containing all archimedean places and all finite places where $C_v > 1$. Then there exists a uniform $\epsilon > 0$ such that no element $\alpha \in K^{\mathrm{cycl}}$ simultaneously satisfies $C_{v_0} < \sup_{w_0 \mid v_0} |\alpha|_{w_0} < C_{v_0} + \epsilon$ for some $v_0 \in S$, and $|\alpha|_w < C_v + \epsilon$ for all $w \mid v$ with $v \in S$ (along with $|\alpha|_w \leq 1$ strictly for all $w \mid v$ with $v \notin S$).
\end{theorem}

\begin{proof}
% For each integer $n \geq 1$, define the perturbation $\mathbf{C}^{(n)} = (C_v^{(n)})_v$ where $C_v^{(n)} = C_v + 2^{-n}$ for $v \in S$, and $C_v^{(n)} = 1$ for $v \notin S$. Clearly $\mathbf{C}^{(n)}$ is also a valid tuple. By Lemma~\ref{L:nonarch covering exists}, there exists a finite $\mathbf{C}^{(1)}$-covering family over $K$. 

Now for $n=1,2,\dots$ in succession, if possible, update the family by applying Lemma~\ref{L: non-archimedean reduction} with the tuple of bounds $\mathbf{C}$ replaced by the perturbed tuple $\mathbf{C}^{(n)}$; otherwise, make no change. By Lemma~\ref{L:well ordering}, there is a smallest index $n$ beyond which the resulting sequence of families stabilizes. The stabilized value is then a $\mathbf{C}$-classifying family which is simultaneously a $\mathbf{C}^{(n)}$-covering family; setting $\epsilon = 2^{-n}$ proves the claim.
\end{proof}

\section{Remarks on effectivity}
\label{sec:remarks on effectivity}

As noted in the introduction, we have not formulated any of our results in an effective manner.
We now justify the claim that they can all be made effective, in the following senses.
\begin{itemize}
\item 
In \Cref{T:general classification rational case}, there is an algorithm that, upon input of $C$, 
returns a finite family of polynomials as indicated.
\item
Similarly, in \Cref{thm:general classification}, there is an algorithm that, upon input of $K$ and the $C_v$, returns a finite family of polynomials as indicated.
\item 
In \Cref{T:semicontinuity rational case}, there is an algorithm that, upon input of $C$, returns a suitable value of $\epsilon$ and a proof certificate.
\item
Similarly, in \Cref{T:uniform gap}, there is an algorithm that, upon input of $K$ and the $C_v$, returns a suitable value of $\epsilon$ and a proof certificate.
\end{itemize}

All of these claims boil down to two essential points. The first is that there exists an explicit choice for the Loxton function $L$ in \Cref{T:loxton} (as this then leads to an effective construction in \Cref{loxton+}
as indicated in \cite{DZ}). The proof in \cite{loxton} does not explicitly report such a function, but one can trace through the steps in the proof to obtain one.

%\kiran{add D'Andrea--Narvaez-Clauss--Sombra reference to bibTeX}\jitendra{done}
The second point is that the replacement step in \Cref{L:reduce covering family for K} and \Cref{L: non-archimedean reduction} can be made effective. This can be achieved by replacing Yuan's equidistribution theorem with an effective version for tori, such as \cite{dandrea-narvaezclauss-sombra}.

%---------------------------------------------------------------------
%	  Section -- Acknowledgements
%---------------------------------------------------------------------
\section*{Acknowledgements}

This paper is a result of the virtual workshop ``Rethinking Number Theory 6'' funded by NSF grants DMS-2201085 and DMS-2418528. In addition, Bajpai was supported by the European Research Council (ERC) under the European Union’s Horizon Europe research and innovation
programme (grant agreement No. 101163794, GroupHype). Das was supported by Prime Minister's Research Fellowship, Government of India. Kedlaya was supported by NSF grant DMS-2401536, the UC San Diego Warschawski Professorship, and during fall 2025 by the Lodha Mathematical Sciences Institute. Mello was supported by Oakland  University BFA and FCT grants.

%-----------------------------------------------------------------
%	  Section -- Bibliography
%---------------------------------------------------------------
\nocite{}
\bibliographystyle{abbrv}
\bibliography{Castle}
\end{document}